\theoremstyle{plain}
\newtheorem{theorem}{Theorem}
\newtheorem{lemma}[theorem]{Lemma}
\newtheorem{proposition}[theorem]{Proposition}
\newtheorem{corollary}{Corollary}
\theoremstyle{remark}
\newtheorem{remark}{Remark}
\newtheorem*{remark*}{Remark}
\theoremstyle{definition}
\newtheorem*{definition*}{Definition}
\newtheorem{hypothesis}{Hypothesis}
\newtheorem{example}{Example}
\newtheorem*{problem*}{Problem}
\def\C{\mathbb{C}}
\def\N{\mathbb{N}}
\def\R{\mathbb{R}}
\def\ND{\mathcal{N}}
\def\supp{{\rm supp}}
\def\spann{{\rm span}}
\def\L{\mathcal{L}_c}
\renewcommand{\epsilon}{\varepsilon}
\def\Imm{\mathrm{Im}}
\newcommand{\norm}[1]{{\left\Vert {#1}\right\Vert}}
\DeclareMathOperator*{\ran}{ran}
\begin{document}
\title[Infinite-dimensional inverse problems with finite measurements]{Infinite-dimensional inverse problems \\with finite measurements}

\author{Giovanni S. Alberti}
\address{MaLGa Center, Department of Mathematics, University of Genoa, Via Dodecaneso 35, 16146 Genova, Italy.}
\email{giovanni.alberti@unige.it}

\author{Matteo Santacesaria}
\address{MaLGa Center, Department of Mathematics, University of Genoa, Via Dodecaneso 35, 16146 Genova, Italy.}
\email{matteo.santacesaria@unige.it}

\subjclass[2010]{35R30, 94A20, 35P25, 78A46}

\thanks{The authors are members of the Gruppo Nazionale per l’Analisi Matematica,
la Probabilit\`a e le loro Applicazioni (GNAMPA) of the Istituto Nazionale di Alta
Matematica (INdAM). G.S.\ Alberti is partially supported by the UniGE starting grant ``Curiosity''. M.\ Santacesaria is partially supported by a INdAM -- GNAMPA Project 2019. This material is based upon work supported by the Air Force Office of Scientific Research under award number FA8655-20-1-7027. The authors would like to thank Otmar Scherzer for  explaining some useful details of \cite{dehoop2012}.}

\date{\today}

\begin{abstract}
We present a general framework to study uniqueness, stability and reconstruction for infinite-dimensional inverse problems when only a finite-dimensional approximation of the measurements is available. For a large class of inverse problems satisfying Lipschitz stability we show that the same estimate holds even with a finite number of measurements. We also derive a globally convergent reconstruction algorithm based on the Landweber iteration. This theory applies to nonlinear ill-posed problems such as electrical impedance tomography, inverse scattering and quantitative photoacoustic tomography, under the assumption that the unknown belongs to a finite-dimensional subspace.
\end{abstract}

\keywords{Inverse problems, Calder\'on's problem, inverse conductivity problem, electrical impedance tomography, inverse scattering, photoacoustic tomography, global uniqueness, Lipschitz stability, reconstruction algorithm}
\maketitle

\section{Introduction}
\subsection{General setup and aim of the paper}
The recovery of an unknown physical quantity from indirect measurements is the main goal of an inverse problem. It is often convenient to consider both the unknown quantity $x$ and the measured data $y$ in a continous setting. This allows for studying, for instance, inverse problems modeled by partial differential equations (PDE) \cite{2017-isakov,2017-hasanov} and, more generally, infinite-dimensional inverse problems where the measurement operator, or forward map, is a function between Banach spaces $F\colon X\to Y$ \cite{2011-kirsch}. The inverse problem consists in the reconstruction of $x$ from the knowledge of the measurements $F(x)$. Several possibly nonlinear inverse problems fit into this framework, including electrical impedance tomography (EIT) \cite{calderon1980,eit-1999,2012-mueller-siltanen}, photoacoustic tomography  \cite{Wang1458,2015-kuchment-kunyansky} (and many other hybrid imaging problems \cite{alberti-capdeboscq-2018}), travel time tomography \cite{2019-stefanov}, and inverse scattering \cite{2013-colton-kress}. 

However, in practice we only have access to a finite-dimensional approximation $Q(F(x))$ of the data, for some finite-rank operator $Q\colon Y\to Y$. It is then critical, in view of the applications, to study how the reconstruction depends on this approximation. The aim of this paper is to provide explicit guarantees for exact recovery for general inverse problems with finite measurements. As we show in this work, this issue is strictly related with that of stability, which we now discuss.

\subsection{Stability of inverse problems and previous work.}
The main motivation of this work comes from some nonlinear inverse problems for PDE, such as Calder\'on's problem for EIT, where the unknown is a conductivity distribution that has to be recovered from boundary voltage and current patterns. This is a severely ill-posed problem, where a small error in the data propagates exponentially to the reconstruction \cite{mandache2001}. In other words, the continuous dependence of the conductivity on the boundary data, also known as stability estimate, is of logarithmic type \cite{alessandrini1988}, and this explains the typical low spatial resolution in the reconstruction.

In the case of general nonlinear inverse problems, such as EIT or inverse scattering, it turns out that a possible way to obtain a stronger stability of Lipschitz type is to assume that the unknown belongs to a known finite-dimensional subspace $W$ of the original space $X$. This has been shown for a number of inverse problems for elliptic PDE \cite{alessandrini1996,2005-alessandrini-vessella,bacchelli2006,beretta2008,2011-beretta-francini,beretta2013,bourgeois2013,beretta2014,beretta2014b,beretta2015,gaburro2015,beretta2016b,beretta2017,alessandrini2017,
alessandrini2017u,Harrach2019b,alessandrini2018,beretta2019}. The main drawback of these results is that, even though the unknown is a finite-dimensional object, infinitely many measurements are still needed. For instance, in the case of EIT, an infinite number of boundary current and voltage data are required to recover a conductivity in a known finite-dimensional space. 

A uniqueness result from a finite number of boundary measurements -- as well as a Lipschitz stability estimate and a reconstruction scheme -- was recently obtained by the authors for the Gel'fand-Calder\'on problem for the Schr\"odinger equation and for EIT under rather general finite-dimensionality assumptions on the unknown \cite{alberti2018}. Afterwards, Lipschitz stability from a finite number of measurements was derived by Harrach  for the complete electrode model in EIT \cite{Harrach_2019} (based on the local problem studied in \cite{Lechleiter_2008}), as well as by R\"uland-Sincich for the fractional Calder\'on problem  \cite{ruland2018}. The linearized EIT problem was previously addressed in \cite{alberti2017infinite} using compressed sensing.
Many related works, mainly for the inverse scattering problem, consider a periodic, polygonal or polyhedral structure in the unknown \cite{friedman1989,cheng2003,alessandrini2005b,bao2011,hu2016,blaasten2016,blaasten2017,liu2017}.

Though the above results constitute a clear improvement showing that a finite number of measurements is enough, they still leave many unanswered questions. In our previous result \cite{alberti2018}, the (finite number of) boundary measurements depend on the unknown conductivity, which is clearly impractical. This assumption was then removed in \cite{Harrach_2019}, though the result is not constructive (the number of measurements is not explicitly given), and the approach is restricted to finite-dimensional subspaces $W$ of piecewise analytic conductivities. The other results either work under very restrictive assumptions on the unknown, or they do not provide criteria to choose the measurements depending on the a priori assumptions.

\subsection{Main contributions of this paper}

In this work, we present a general framework to solve infinite-dimensional inverse problems when only a finite number of measurements is available.\footnote{After the first version of this preprint was published, several works on this topic have appeared \cite{harrach-2019,alberti2020calderon,alberti2020inverse}.}

In Section~\ref{sec:lip} we prove a Lipschitz stability estimate for inverse problems with finite  measurements. We also give an explicit criterion to choose the type and number of measurements depending on the a priori assumptions on the unknown, namely on the space $W$. The result is obtained with functional analytic techniques and it takes inspiration from the general Lipschitz stability results of \cite{bacchelli2006, bourgeois2013} as well as from the already mentioned works on EIT  \cite{alberti2018,Harrach_2019}.

In Section~\ref{sec:rec} we derive a globally convergent reconstruction algorithm for the inverse problem by combining the Lipschitz stability obtained in the previous section with the results of \cite{dehoop2012} (summarized in Appendix~\ref{sec:app}), which guarantee local convergence for the associated Landweber iteration. We show how a good initial guess can be recovered by constructing a lattice in the (finite-dimensional) unknown space.

In Section~\ref{sec:exa}, we apply the abstract results to three inverse problems, EIT, inverse scattering and quantitative photoacoustic tomography. For EIT, we show a Lipschitz stability estimate from a finite number of measurements independent of the unknown, under the assumption that a Lipschitz stability estimate holds in the case of full measurements. Thus, this covers a large class of finite-dimensional spaces of conductivities. Moreover, an explicit criterion is given in order to choose the number of measurements. In particular, we consider the special case of conductivities on the unit disk: assuming that a Lipschitz stability estimate for the full Neumann-to-Dirichlet map holds with a constant $C$, we show that Lipschitz stability also holds with only a finite number of trigonometric current patterns proportional to $C^2$.

Finally, in Section~\ref{sec:conclusions} we provide some concluding remarks and discuss future research directions.

\subsection{Connections with machine learning}
In recent years, {\it machine learning} has arisen as a data-driven alternative to classical inversion methods, and has shown tremendous improvements in dealing with the ill-posedness of several inverse problems \cite{arridge2019,Lucas18,McCann17,2020-willet-etal}.
The results presented in this paper may be used in connection with machine learning techniques in several ways.
\begin{itemize}
\item Our main stability result holds under the assumption that the unknown belongs (or is close) to a known finite-dimensional subspace $W$ of a Banach space. In case this subspace is not known, it could be estimated by using, for instance, dictionary learning techniques \cite{mairal2011task}.
\item The main stability estimate of this work shows the Lipschitz continuity of an inverse mapping. This property allows the inverse to be approximated with neural networks \cite{yarotsky2017error}, even when $W$ is not known. The explicit discretisation that we use in the stability result would help in designing the architecture of such a network.
\item The iterative reconstruction algorithm presented in Section~\ref{sec:rec} can be \textit{unrolled} and seen as a neural network as in \cite{adler2017solving,adler2018learned}. Also in this context the subspace $W$ can be learned, namely the projection operator $P_W$ can be replaced by weights of a deep network.
\end{itemize}

\section{Lipschitz stability estimate}\label{sec:lip}

\subsection{Abstract inverse problem}
Let $X$ and $Y$ be Banach spaces with norms $\| \cdot \|_X$ and $\| \cdot \|_Y$. We denote the space of continuous linear operators from  $X$ to $Y$ by $\L(X,Y)$.
Let $A \subseteq X$ be an open set and $F \colon A \to Y$ be a map of class $C^1$. In other words,  $F$ is Fréchet differentiable at every $x \in A$, namely  there exists $F'(x)  \in \L(X,Y)$ such that
\[
\lim_{h \to 0}\frac{\|F(x+h)-F(x)-F'(x) h\|_Y}{\|h\|_X}=0,
\]
and  $F'$ is continuous. We shall refer to $F$ as the forward map.

The focus of this work is the following inverse problem: given $F(x)$, recover $x$. Very often, even if the map $F$ is injective, its inverse $F^{-1}$ is not continuous, and so the inverse problem is ill-posed.

\subsection{Lipschitz stability with infinite-dimensional measurements}
We are interested in a Lipschitz stability estimate for this general inverse problem. One way to obtain it, without imposing strong assumptions on the map $F$, is to assume that the unknown $x$ belongs to a finite-dimensional subset $W$ of $X$.  We recall here a general result obtained in \cite[Proposition 5]{bacchelli2006} and \cite[Theorem 1]{bourgeois2013} in a similar setting.

\begin{theorem}\label{theo:lip}
Let $X$ and  $Y$ be Banach spaces. Let $A \subseteq X$ be an open subset, $W \subseteq X$ be a finite-dimensional subspace and $K \subseteq W \cap A$ be a compact and convex subset. 
Let $F \in C^1(A,Y)$ be such that $F|_{W \cap A}$ and $F'(x)|_{W}$, $x \in W \cap A$, are injective.

Then there exists a constant $C> 0$ such that
\begin{equation}\label{est:lipinf}
\|x_1-x_2\|_X \leq C \| F(x_1)-F(x_2)\|_Y,\qquad x_1,x_2 \in K.
\end{equation}
\end{theorem}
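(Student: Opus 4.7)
The plan is to argue by contradiction, exploiting the compactness of $K$ together with the finite-dimensionality of $W$ (so that the unit sphere in $W$ is compact) and the two injectivity hypotheses — one global on $K$ for $F$, one pointwise for the differential $F'(x)|_W$.

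First I would assume that the estimate fails: there exist sequences $(x_{1,n}),(x_{2,n}) \subset K$ with
\[
\|x_{1,n}-x_{2,n}\|_X > n\,\|F(x_{1,n})-F(x_{2,n})\|_Y,\qquad n\in\mathbb{N}.
\]
By compactness of $K$, after extraction I get $x_{1,n}\to x_1^\ast$, $x_{2,n}\to x_2^\ast$ in $K$. There are two cases. If $x_1^\ast \neq x_2^\ast$, then continuity of $F$ and injectivity of $F|_{W\cap A}$ yield $\|F(x_1^\ast)-F(x_2^\ast)\|_Y>0$ while $\|x_{1,n}-x_{2,n}\|_X$ stays bounded, which contradicts the assumed inequality for $n$ large.

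The substantive case is $x_1^\ast = x_2^\ast =: x^\ast \in K$. Here I would normalize the differences by setting
\[
h_n := \frac{x_{1,n}-x_{2,n}}{\|x_{1,n}-x_{2,n}\|_X} \in W,\qquad \|h_n\|_X = 1.
\]
Since $W$ is finite-dimensional, its unit sphere is compact, so up to a further extraction $h_n \to h \in W$ with $\|h\|_X = 1$. Using convexity of $K$ (so that the segment $[x_{2,n},x_{1,n}]$ lies in $K \subseteq A$) together with $F\in C^1$, I would write
\[
F(x_{1,n})-F(x_{2,n}) = \int_0^1 F'\bigl(x_{2,n}+t(x_{1,n}-x_{2,n})\bigr)(x_{1,n}-x_{2,n})\,dt,
\]
divide both sides by $\|x_{1,n}-x_{2,n}\|_X$, and pass to the limit. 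The left-hand side tends to $0$ by the contradiction hypothesis, while the integrand converges uniformly in $t\in[0,1]$ to $F'(x^\ast)h$ thanks to continuity of $F'$ and boundedness of $h_n$. Hence $F'(x^\ast)h = 0$, and injectivity of $F'(x^\ast)|_W$ forces $h=0$, contradicting $\|h\|_X=1$.

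The main obstacle is the degenerate case $x_1^\ast = x_2^\ast$, where the hypothesis on $F$ alone gives no information; the key is that a compactness argument on the normalized directions $h_n$ in the finite-dimensional space $W$ reduces matters to the injectivity of the linearization at the limit point, which is precisely what is assumed. A minor technical point to address is justifying the use of the integral mean-value formula (requiring the segment between $x_{1,n}$ and $x_{2,n}$ to lie in the domain of $F'$), which is where the convexity of $K$ together with $K \subseteq A$ is invoked.
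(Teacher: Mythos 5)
Your proof is correct and is essentially the same compactness-plus-linearization argument as the one the paper relies on (the paper itself only cites \cite{bacchelli2006,bourgeois2013} for this statement, and those references prove it exactly this way: contradiction, extraction in the compact set $K$, the two cases $x_1^\ast\neq x_2^\ast$ and $x_1^\ast=x_2^\ast$, normalization of the differences in the finite-dimensional unit sphere of $W$, and the integral mean-value formula leading to $F'(x^\ast)h=0$). All the delicate points — convexity of $K$ to justify the integral formula, uniform convergence of the integrand via continuity of $F'$, and the injectivity of $F'(x^\ast)|_W$ to rule out $\|h\|=1$ — are handled properly.
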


The constant $C$ can be explicitly estimated in terms of the lower bound of the Fréchet derivative and the moduli of continuity of $(F|_{K})^{-1}$ and $F'$, as done in \cite{bacchelli2006}, or with ad-hoc techniques depending on the specific problem as in \cite{rondi2006}. In general, for ill-posed problems, $C\to +\infty$ as $\dim W\to+\infty$ (which is consistent with the results of \cite{1980-seidman}), and so this approach works well for low-dimensional spaces $W$ only.

Estimate \eqref{est:lipinf} shows that even a severely ill-posed problem becomes stable under the above finite dimensionality assumptions. Nevertheless, this is still unsatisfactory because we are recovering a finite-dimensional quantity $x$ from infinite-dimensional (and so, infinitely many scalar) measurements $F(x)$.

\subsection{Lipschitz stability with finitely many measurements}

Instead of measuring the infinite-dimensional data $F(x)$, we now suppose to have the measurements
\[
Q_N F(x)
\]
for $N\in\N$ large enough (depending explicitly on $K$ and $C$, as we will see), where $Q_N\colon Y\to Y$ are (finite-rank) bounded operators approximating the identity  $I_Y$ as $N\to +\infty$. The precise characterization of the convergence $Q_N\to I_Y$ is as follows.
\begin{hypothesis}\label{hyp}
Let  $W$ be a subspace of $X$ and $K \subseteq W \cap A$. For $N\in\N$, let $Q_N\colon Y\to Y$ be bounded linear maps. We assume that there exists a subspace
 $\tilde Y$ of $Y$ such that
 \begin{enumerate}[(a)]
 \item\label{ass:a} for every $\xi\in K$
\[
\ran \bigl(F'(\xi)|_W\bigr)\subseteq \tilde Y
\]
(if $F$ is linear, this is equivalent to requiring $\ran F|_W\subseteq\tilde Y$);
\item\label{ass:b} $\norm{Q_N}_{Y\to Y}\le D$ for every $N\in\N$, for some $D\ge 1$;
\item\label{ass:c} and, as $N\to +\infty$,
\[
Q_N|_{\tilde Y}\to I_{\tilde Y}
\]
with respect to the strong operator topology.
\end{enumerate}
\end{hypothesis}

\begin{remark*}
Several works study inverse problems with finitely-many measurements in the Bayesian approach to discretisation errors \cite{kaipio-somersalo-2007,lassas-etal-2009,stuart-etal-2012,kekkonen-etal-2014}, but they only consider asymptotic estimates (as $N$ goes to infinity) and are mostly for linear problems.

The framework considered here is similar to that of multilevel regularization methods, where discretization in the image space is used as a regularizer \cite{scherzer_book}. More precisely, it is possible to interpret this hypothesis as a smoothing property (cfr.\ the regularization theory by discretization \cite[Sections~4.3 and 5.1]{scherzer_book}).
\end{remark*}

Let us now list some important examples of operators $Q_N$. These are applied to realistic scenarios in Section~\ref{sec:exa}. \smallskip

Any family of bounded operators $Q_N$ such that $Q_N\to I_Y$ strongly may be considered (in particular, we do not require convergence with respect to the operator norm). In the next example, we discuss how this situation can arise in practice.

\begin{example}[Projections onto finite-dimensional subspaces]\label{ex:hil}

Let the data space $Y$ be an infinite-dimensional separable Hilbert space and $\{e_j\}_{j\in\N}$ be an orthonormal basis (ONB) of $Y$. Then $Q_N \colon Y \to Y$ can be taken as the projection onto the space generated by the first $N$ elements of the basis, i.e.\ $Q_N y = \sum_{j=1}^N \langle y,e_j\rangle e_j$.

More generally, assume that $Y$ has the following multi-resolution structure. Let $\{G_N\}_{N\in\N}$ be a family of subspaces of $Y$ satisfying:
\begin{enumerate}
\item each $G_N \subseteq Y$ is a finite-dimensional subspace;
\item $G_N \subseteq G_{N+1}$ for every $N \in \N$;
\item and the spaces are exhaustive, i.e. $\overline{\bigcup_{N\in \N}G_N} = Y$.
\end{enumerate}
Let $P_N\colon Y\to Y$ be the orthogonal projection onto $G_N$. We have that $P^*_N = P_N$ and if $G_N = \spann\{g_1,\dots,g_{d_N}\}$, where $d_N = \dim G_N$, the measurements
\[
\langle F(x),g_k\rangle, \quad k = 1,\ldots, d_N,
\]
determine the Galerkin projection $P_N F(x)$. Thus this represents a model for a finite number of measurements from the full data $F(x)$. In this case we can choose $Q_N = P_N$.

For example, the projections $Q_N$ may represent a low-pass filter in frequency (modelling sensors up to a certain bandwidth) or in wavelet scale.
\end{example}

Next, we consider a particular case of multi-resolution structure in reproducing kernel Hilbert spaces, in order to view the projected measurements $Q_NF(x)$ as a finite pointwise	 sampling of the full measurements $F(x)$.
\begin{example}[Finite sampling in reproducing kernel Hilbert spaces]\label{ex:rkhs}
Let $Y$ be a reproducing kernel Hilbert space (RKHS) of continuous functions on a separable topological space $\mathcal{A}$ \cite{1950-Aronszajn} (for instance, $Y$ can be a sufficiently smooth Sobolev space, possibly on a manifold \cite{devito-2020}, which is a common setup in inverse problems). Roughly speaking, this means that the evaluation functionals $f\mapsto f(a)$ are continuous for every $a\in \mathcal{A}$, so that by the Riesz representation theorem we can write the reproducing property
\[
f(a)=\langle f,k_a\rangle_Y,\qquad f\in Y,\,a\in\mathcal{A},
\]
for a suitable $k_a\in Y$. The kernel of the RKHS is then
\[
k\colon\mathcal{A}\times\mathcal{A}\to\C,\qquad k(a,b)=\langle k_b,k_a\rangle_Y.
\]

Let $\{a_j:j\in\N\}$ be a dense set of $\mathcal{A}$ (assume that $a_j\neq a_k$ if $j\neq k$): its existence follows from the separability of $\mathcal{A}$. Set
\[
G_N =\operatorname{span}\{k_{a_j}:j=1,\dots,N\},\qquad N\in\N.
\]
Let us now verify that the conditions of Example~\ref{ex:hil} are satisfied. By construction, $G_N$ is a finite-dimensional subspace of $Y$ and $G_N\subseteq G_{N+1}$ for every $N\in\N$. It remains to show that $\overline{\bigcup_{N\in \N}G_N} = Y$. Take $f\in Y$ such that $\langle f,g\rangle_Y = 0$ for every $g\in \bigcup_{N\in \N}G_N$. Then
\[
f(a_j)=\langle f,k_{a_j}\rangle =0,\qquad j\in\N,
\]
and so $f=0$ because it is continuous and $\{a_j:j\in\N\}$ is  dense in  $\mathcal{A}$. The claim is proved.

As in Example~\ref{ex:hil}, we can let $Q_N\colon Y\to Y$ be the projection onto $G_N$. Since
\[
\langle Q_N f,k_{a_j}\rangle=\langle f,k_{a_j}\rangle =f(a_j) ,\qquad j=1,\dots,N,
\]
the pointwise values $f(a_j)$ for $j=1,\dots,N$ allows us to recover $Q_N f$. Furthermore, since $G_N$ is finite-dimensional, there exists $C_N>0$ such that 
\begin{equation}\label{eq:stable_sampling}
\|Q_N f\|_Y \le C_N \|\bigl(f(a_1),\dots,f(a_N)\bigr)\|_2,\qquad f\in Y.
\end{equation}
In other words, the reconstruction of $Q_N f$ from the sampled values of $f$ is stable.
\end{example}

Let us now consider a third example that is adapted to inverse boundary value problems, in which the measurements themselves are operators. In this case, the maps $Q_N$ do not converge strongly to the identity on the whole $Y$.

\begin{example}[Projections with operator-valued measurements]\label{ex:gal}
Let $Y = \L (Y^1,Y^2)$ be the space of bounded linear operators from $Y^1$ to $Y^2$, where $Y^1$ and $Y^2$ are Banach spaces. Let  $P^k_N \colon Y^k \to Y^k$ be bounded maps such that $P^{2}_N \to I_{Y^k}$ and $(P^{1}_N)^* \to I_{Y^k}$ strongly as $N\to +\infty$ (for instance, when $Y^k$ is a Hilbert space, $P^k_N$ may be chosen  as the projection onto the $N$-th subspace of an exhaustive chain, as in Example~\ref{ex:hil}). 

Let us now take $\tilde Y=\{T\in Y:\text{$T$ is compact}\}$ and assume that
\begin{equation}\label{eq:compact}
F'(\xi)\tau \colon Y^1\to Y^2\;\; \text{is compact for every $\xi \in K, \tau \in W$,}
\end{equation}
which is satisfied in many cases of interest. It is worth observing that this condition is implied by
\begin{equation*}\label{eq:compact2}
F(x_1)-F(x_2)\;\; \text{is compact for every $x_1,x_2 \in A$,}
\end{equation*}
since 
$
F'(\xi)\tau=\lim_{h \to 0} \frac{F(\xi +h\tau) -F(\xi)}{h}
$
is the operator-norm limit of compact operators, and so it is compact.

Let  $Q_N \colon Y \to Y$  be the maps defined by
\[
Q_N (y) = P^2_N y P^1_N,\qquad y \in Y.
\]

Even if $Q_N \not\to I_Y$ strongly, let us show that Hypothesis~\ref{hyp} is satisfied. Condition \ref{ass:a} follows immediately from \eqref{eq:compact}. By the uniform boundedness principle the operators $P^k_N$ are uniformly bounded, and so  \ref{ass:b} is satisfied with $D=\sup_N \|P^1_N\|_{Y^1\to Y^1}\|P^2_N\|_{Y^2\to Y^2}$. It remains to verify \ref{ass:c}. For 
 $T \in\tilde Y$ we have
\begin{equation}\label{eq:RN}
\begin{split}
\| T-Q_N T \|_Y &= \| T -  P^2_N T+ P^2_N T- P^2_N T P^1_N \|_Y \\
& \leq \|(I_{Y^2}-P_N^2)T\|_Y + \|P^2_NT(I_{Y^1}-P_N^1)\|_Y\\ 
& \leq \|(I_{Y^2}-P_N^2)T\|_Y + \|P^2_N\|_{Y^2\to Y^2}\|T(I_{Y^1}-P_N^1)\|_Y.
\end{split}
\end{equation}
Since $P_N^{2} \to I_{Y^2}$ and $(P_N^{1})^* \to I_{Y^1}$ strongly, and $T \colon Y^1 \to Y^2$ is compact, by a standard result in functional analysis (see Lemma~\ref{lem:comp} below) we have that the right hand side converges to $0$ as $N\to +\infty$, and \ref{ass:c} follows.
\end{example}

Our main result states that it is indeed possible to obtain Lipschitz stability from a finite number of measurements, obtained by composing $Q_N$ with the full measurements, at the price of a slightly larger Lipschitz constant. In particular, we also obtain a uniqueness result with finite measurements.

\begin{theorem}\label{theo:main}
Let $X$ and $Y$ be Banach spaces. Let $A \subseteq X$ be an open subset, $W\subseteq X$ be a subspace and $K \subseteq W\cap A$ be a compact and convex subset. Let $F \in C^1(A,Y)$ be a Lipschitz map satisfying the Lipschitz stability estimate
\[
\|x_1-x_2\|_X \leq C \| F(x_1)-F(x_2)\|_Y, \qquad x_1,x_2 \in K,
\]
for some $C\ge 1$.
Let $Q_N\colon Y\to Y$ be bounded linear maps for $N\in\N$ satisfying Hypothesis~\ref{hyp}.
\begin{enumerate}[(i)]
\item  \label{cond:fin} If $W$ is finite-dimensional, then 
\[
\lim_{N \to +\infty}s_N = 0,\qquad s_N = \sup_{\xi \in K}\|(I_Y-Q_N) F'(\xi)\|_{W \to Y}.
\]
\item\label{cond:sn} If $s_N \leq \frac{1}{2C}$, then for every $x_1,x_2\in A$
\begin{equation}\label{eq:lip2C}
\|x_1-x_2\|_X \leq 2 C \| Q_N(F(x_1))-Q_N(F(x_2))\|_Y + 3\, CDL (d(x_1,K)+d(x_2,K)), 
\end{equation}
where $L\ge 1$ is the Lipschitz constant of $F$.
\end{enumerate}
\end{theorem}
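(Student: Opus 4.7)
The plan is to treat (i) and (ii) separately, both relying on elementary uniformity and absorption arguments.

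For (i), I would exploit the compactness of $K$ and the finite-dimensionality of $W$ to convert the strong convergence $Q_N|_{\tilde Y}\to I_{\tilde Y}$ into uniform convergence. Since $F\in C^1$, the map $\xi\mapsto F'(\xi)|_W$ is continuous from $K$ into $\L(W,Y)$, so the set
\[
S=\{F'(\xi)w:\xi\in K,\ w\in W,\ \|w\|_X\le 1\},
\]
being the continuous image of the compact product $K\times\overline{B_W}$ (here $\overline{B_W}$ is compact since $W$ is finite-dimensional), is a compact subset of $Y$. By Hypothesis~\ref{hyp}\ref{ass:a}, $S\subseteq\tilde Y$. The operators $I_Y-Q_N$ are uniformly bounded by $1+D$ and converge pointwise to $0$ on $\tilde Y$, so a standard $\varepsilon/3$ argument (pointwise convergence plus equi-continuity) upgrades pointwise convergence to uniform convergence on $S$. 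Since $s_N=\sup_{y\in S}\|(I_Y-Q_N)y\|_Y$, this yields $s_N\to 0$.

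For (ii), I would first treat $x_1,x_2\in K$. Convexity of $K$ and $F\in C^1(A,Y)$ give
\[
F(x_1)-F(x_2)=\int_0^1 F'(\xi_t)(x_1-x_2)\,dt,\qquad \xi_t:=x_2+t(x_1-x_2)\in K.
\]
Since $W$ is a subspace and $x_1-x_2\in W$, applying $I_Y-Q_N$ under the integral and using the definition of $s_N$ yields
\[
\|(I_Y-Q_N)(F(x_1)-F(x_2))\|_Y\le s_N\|x_1-x_2\|_X.
\]
Combining this with the hypothesis $\|x_1-x_2\|_X\le C\|F(x_1)-F(x_2)\|_Y$ and using $s_N\le 1/(2C)$ to absorb the term $Cs_N\|x_1-x_2\|_X$ into the left-hand side produces
\[
\|x_1-x_2\|_X\le 2C\|Q_N(F(x_1)-F(x_2))\|_Y,\qquad x_1,x_2\in K.
\]

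To extend this to arbitrary $x_1,x_2\in A$, I would pick $p_i\in K$ with $\|x_i-p_i\|_X=d(x_i,K)$ (which exist by compactness of $K$), apply the $K$-estimate to $p_1,p_2$, and telescope via the triangle inequality using $\|Q_N\|_{Y\to Y}\le D$ and the Lipschitz constant $L$ of $F$. The cross terms $Q_N(F(p_i)-F(x_i))$ each contribute at most $DL\,d(x_i,K)$ inside the norm on the right-hand side, so collecting gives an additive error of $(1+2CDL)(d(x_1,K)+d(x_2,K))$, which is bounded by $3CDL(d(x_1,K)+d(x_2,K))$ since $C,D,L\ge 1$.

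The main obstacle I anticipate is the uniformity in $\xi\in K$ in part (i): the pointwise bound $\|(I_Y-Q_N)F'(\xi)|_W\|_{W\to Y}\to 0$ is straightforward from Hypothesis~\ref{hyp}, but without the compactness argument sketched above one cannot deduce the uniform convergence needed to control the integral representation used in (ii) along the entire segment $\{\xi_t\}_{t\in[0,1]}$. The passage from $K$ to $A$ in (ii) is routine bookkeeping, but one must be careful to land exactly at the constant $3CDL$ (using $C,D,L\ge 1$ to fold the stray additive $1$ into $2CDL$).
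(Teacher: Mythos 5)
Your proposal is correct and follows essentially the same route as the paper: part (i) is the standard ``uniformly bounded operators converging strongly converge uniformly on compact sets'' argument applied to the compact image of $K\times\overline{B_W}$ under $(\xi,w)\mapsto F'(\xi)w$ (the paper phrases this via a subsequence of maximizers rather than an $\varepsilon/3$ covering, but the content is identical), and part (ii) is the same absorption via the mean value theorem on the segment in $K$ followed by projection onto nearest points of $K$ and the bound $1+2CDL\le 3CDL$. The only cosmetic difference is your use of the Bochner-integral form of the mean value theorem in place of the mean-value inequality the paper cites; both are valid for $C^1$ maps on the convex set $K\subseteq A$.
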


\begin{remark}\label{rem:1}
Even though  \ref{cond:fin} and \ref{cond:sn} combined clearly imply that there exists $N\in\N$ for which \eqref{eq:lip2C} holds true, we decided to separate the two conditions since in many cases it is possible to show directly $s_N \leq \frac{1}{2C}$, and this allows to derive explicit bounds on $N$ (as in Section \ref{sub:eit}). More precisely, the inequality
\begin{equation}\label{cond:sn2}
\sup_{\xi \in K}\|(I_Y-Q_N) F'(\xi)\|_{W \to Y} \leq \frac{1}{2C}
\end{equation}
gives  an explicit relation between the number of measurements/discretization parameter $N$, the space $W$ and the Lipschitz stability constant $C$ of the full-mea\-su\-re\-ment case. It is worth observing that this condition may be seen as a nonlinear version of the \textit{stable sampling rate} and the \textit{balancing property}, which were introduced in the context of generalized sampling and compressed sensing for infinite-dimensional linear problems \cite{AH-2012,AHP-2013,AH,2017-adcock-hansen-poon-roman}.
\end{remark}

\begin{remark}\label{rem:2}
When the mismodeling errors $d(x_i,K)$ are zero, estimate \eqref{eq:lip2C} is exactly a Lipschitz stability estimate. Otherwise, it is natural to have an error term depending on the distance to $K$. We want to underline that, in our framework, this error term is amplified by the Lipschitz constant $C$, which may be very large for severely ill-posed problems. This is due to the linearization techniques in our proof and it might be a price to pay to obtain a general result such as Theorem~\ref{theo:main}. However, in \cite{alberti2018,alberti2020calderon} we were able to obtain sharper estimates with respect to the mismodeling error for Calder\'on's inverse conductivity problem, thanks to specific nonlinear inversion methods.
\end{remark}

Combining Theorems \ref{theo:lip} and \ref{theo:main} we obtain the following corollary for the cases discussed in Examples~\ref{ex:hil} and \ref{ex:gal}.
\begin{corollary} \label{cor:main}
Let $X$ and $Y$ be Banach spaces. Let $A \subseteq X$ be an open subset, $W$ be a finite-dimensional subspace of $X$ and $K$ be a compact and convex subset of $W \cap A$. 
Let $F \in C^1(A,Y)$ be such that $F|_{W \cap A}$ and $F'(x)|_{W}$, $x \in W \cap A$, are injective. Let $Q_N\colon Y\to Y$ be bounded linear maps for $N\in\N$. Assume one of the following:
\begin{enumerate}[(a)]
\item \label{cond:yhil} $Q_N\to I_Y$ strongly as $N\to +\infty$ (as, e.g., in Example \ref{ex:hil});
\item \label{cond:ylin} $Y = \L (Y^1,Y^2)$, with $Y^1$ and $Y^2$ Banach spaces,   $Q_N$ is given as in  Example~\ref{ex:gal}, and $F'(\xi)\tau \colon Y^1\to Y^2$ is compact for every $\xi \in A$ and $\tau \in W$.
\end{enumerate}
Then there exist $N \in \N$ and $C>0$ such that
\begin{equation}
\|x_1-x_2\|_X \leq C \| Q_N(F(x_1))-Q_N(F(x_2))\|_Y, \qquad x_1,x_2 \in K.
\end{equation}
\end{corollary}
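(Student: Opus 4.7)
The plan is to derive the corollary as an assembly of Theorems \ref{theo:lip} and \ref{theo:main}: first invoke Theorem \ref{theo:lip} to obtain full-measurement Lipschitz stability, then verify Hypothesis \ref{hyp} in each of the two alternative setups, and finally apply Theorem \ref{theo:main}\ref{cond:sn} after choosing $N$ large enough.

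The hypotheses of Theorem \ref{theo:lip} are exactly those of the corollary, so it supplies a constant $C_0 \geq 1$ such that $\|x_1 - x_2\|_X \leq C_0\,\|F(x_1)-F(x_2)\|_Y$ for all $x_1,x_2 \in K$. Since $K$ is compact and $F \in C^1(A,Y)$, the derivative $F'$ is bounded on a sufficiently small open convex neighborhood $A' \subseteq A$ of $K$, so $F|_{A'}$ is Lipschitz; replacing $A$ with $A'$ preserves all standing hypotheses while supplying the global Lipschitz property required by Theorem \ref{theo:main}.

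Next, I would verify Hypothesis \ref{hyp} under each alternative. In case \ref{cond:yhil}, take $\tilde Y = Y$: condition \ref{ass:a} is trivial, condition \ref{ass:b} follows from the Banach--Steinhaus theorem applied to the strongly convergent family $\{Q_N\}$, and condition \ref{ass:c} is precisely the assumption. In case \ref{cond:ylin}, the verification is the one already laid out in Example \ref{ex:gal}: with $\tilde Y$ taken to be the space of compact operators from $Y^1$ to $Y^2$, condition \ref{ass:a} follows from the assumed compactness of $F'(\xi)\tau$ for $\xi \in A$, $\tau \in W$; condition \ref{ass:b} follows from the uniform boundedness of $\{P^k_N\}$ (Banach--Steinhaus applied to the strong convergence of $P^2_N$ and $(P^1_N)^*$); and condition \ref{ass:c} follows from the telescoping estimate \eqref{eq:RN} combined with the standard functional-analytic fact that compact operators convert strongly convergent sequences of operators into norm-convergent ones.

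With Hypothesis \ref{hyp} in force and $W$ finite-dimensional, Theorem \ref{theo:main}\ref{cond:fin} yields $s_N \to 0$. I then pick any $N \in \N$ with $s_N \leq 1/(2C_0)$ and apply Theorem \ref{theo:main}\ref{cond:sn} to arbitrary $x_1, x_2 \in K$: since the mismodeling terms $d(x_i,K)$ vanish, the inequality \eqref{eq:lip2C} collapses to the claimed estimate with $C = 2C_0$. The only nontrivial point in the argument is the verification of Hypothesis \ref{hyp}\ref{ass:c} in case \ref{cond:ylin}, where $Q_N$ does \emph{not} converge strongly on the whole of $Y$; this is precisely the obstacle that the intermediate space $\tilde Y$ of compact operators, together with the compactness hypothesis on $F'(\xi)\tau$, is designed to overcome.
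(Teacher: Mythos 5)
Your proof is correct and takes essentially the same route the paper intends: Corollary~\ref{cor:main} is presented there as an immediate combination of Theorem~\ref{theo:lip} (for the full-measurement constant) with Theorem~\ref{theo:main}\ref{cond:fin}--\ref{cond:sn}, the verification of Hypothesis~\ref{hyp} in the two cases being exactly the content of Examples~\ref{ex:hil} and~\ref{ex:gal}, and the mismodeling terms vanishing for $x_1,x_2\in K$. Your additional care in shrinking $A$ to a convex neighbourhood of $K$ on which $F'$ is bounded, so as to secure the Lipschitz hypothesis of Theorem~\ref{theo:main} that the corollary does not state explicitly, is a valid (and welcome) detail.
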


This corollary shows that in the rather common cases discussed in Examples~\ref{ex:hil} and \ref{ex:gal}, under basically the same conditions of Theorem \ref{theo:lip} we obtain Lipschitz stability from a finite number of measurements.  

Let us now prove the main result.

\begin{proof}[Proof of Theorem \ref{theo:main}]
We prove the two parts of the statement separately.

\textit{Proof of \ref{cond:fin}.} Let $K' = K \times (W \cap S(0,1))$, where $S(0,1)$ is the unit sphere in $X$ centered in $0$, and
\[
g \colon K' \to Y, \quad g(\zeta) = F'(\xi)\tau,\quad \zeta = (\xi,\tau).
\]
We first show that $g$ is continuous. Given a converging sequence $\zeta_n= (\xi_n,\tau_n) \to \zeta = (\xi,\tau)$ in $K'$, we have
\begin{align*}
\|g(\zeta_n)-g(\zeta)\|_Y &= \|F'(\xi_n)\tau_n - F'(\xi)\tau\|_Y\\
&\leq \|(F'(\xi_n) - F'(\xi))\tau_n\|_Y + \|F'(\xi)(\tau_n-\tau)\|_Y\\
&\leq \|F'(\xi_n) - F'(\xi)\|_{X \to Y}\|\tau_n\|_X + \|F'(\xi)\|_{X\to Y} \|\tau_n-\tau\|_X.
\end{align*}
Using the fact that $F \in C^1(A,Y)$, $\|\tau_n\|_X = 1$ and $\|\tau_n-\tau\|_X \to 0$ we have that $\|g(\zeta_n)-g(\zeta)\|_Y \to 0$, and so $g$ is continuous. Set $R_N=I_Y-Q_N$. Thus the map
\[
K' \ni (\xi,\tau)   \mapsto \|R_N F'(\xi)\tau\| = \|R_N g(\zeta)\|\in [0,+\infty)
\]
is continuous on the compact set $K'$, so there exists $\zeta_N \in K'$ such that
\[
s_N = \sup_{\xi \in K}\|R_N F'(\xi)\|_{W \to Y} =\sup_{\zeta \in K'} \|R_N g(\zeta)\|_Y = \|R_N g(\zeta_N)\|_Y.
\]
Let $(s_{N_j})_j$ be a subsequence of $(s_N)_N$: by a classical result in general topology, it is enough to show that $(s_{N_j})_j$ has a subsequence converging to $0$. The set $K'$ is compact, and so $\zeta_{N_j} \to \zeta_*$ in $K'$ for  a subsequence (with an abuse of notation, we do not specify the second subsequence), so
\begin{align*}
\|R_{N_j} g(\zeta_{N_j})\|_Y &\leq \|R_{N_j} (g(\zeta_{N_j})-g(\zeta_*))\|_Y + \|R_{N_j} g(\zeta_*)\|_Y\\
&\leq \|R_{N_j} \|_{\tilde Y \to Y} \|g(\zeta_{N_j})-g(\zeta_*)\|_Y+\|R_{N_j} g(\zeta_*)\|_Y,
\end{align*}
where we have used Hypothesis~\ref{hyp}\ref{ass:a}.
Since $\|Q_{N_j}\|_{\tilde Y \to Y} \leq D$ by Hypothesis~\ref{hyp}\ref{ass:b}, we have $\|R_{N_j} \|_{\tilde Y \to Y}  \leq D+1$. Moreover, $\|g(\zeta_{N_j})-g(\zeta_*)\|_Y \to 0$ as $j \to +\infty$ since $g$ is continuous. Finally, $\|R_{N_j} g(\zeta_*)\|_Y \to 0$ as $j \to +\infty$ by Hypothesis~\ref{hyp}\ref{ass:c}. This shows that $s_{N_j}\to 0$ as $j \to +\infty$, as desired.

\textit{Proof of \ref{cond:sn}.} Let $N$ be such that $s_N\le \frac{1}{2C}$.
Take $x_1,x_2\in A$ and let $x_1^K,x_2^K\in K$ be such that
\begin{equation}\label{eq:distance}
d(x_j,K)=\|x_j-x_j^K\|_X,\qquad j=1,2.
\end{equation}
(The existence of these points is guaranteed by the compactness of $K$.) By the Lipschitz stability estimate of $F$ we have
\begin{equation}\label{pf:ineq}
\begin{split}
\|x_1^K-x_2^K\|_X  &\leq C \|F(x_1^K)-F(x_2^K)\|_Y \\
&\leq C\| Q_N(F(x_1^K)-F(x_2^K))\|_Y+C \|R_N(F(x_1^K)-F(x_2^K))\|_Y.
 \end{split}
\end{equation}

Consider now $f \colon A \to Y$, $f(x)  = R_N F(x)$. Then it is easy to verify that its Fréchet derivative satisfies $f'(x) = R_N F'(x)$, because $R_N$ is linear. Then, by the mean value theorem for Gateaux differentiable functions between Banach spaces (see \cite[Theorem 1.8]{ambrosetti1995} for instance) and since $K$ is convex, there exists $\xi \in K$ such that
\[
\frac{\|R_N(F(x_1^K)-F(x_2^K))\|_Y}{\|x_1^K-x_2^K\|_X}\leq \frac{\|R_N F'(\xi)(x_1^K-x_2^K) \|_Y}{\|x_1^K-x_2^K\|_X}\leq s_N \leq \frac{1}{2C}.
\]
Plugging this estimate into inequality \eqref{pf:ineq} yields
\[
\begin{split}
\|x_1^K-x_2^K\|_X &\leq 2 C\| Q_N(F(x_1^K)-F(x_2^K))\|_Y\\
 &\leq 2C\|Q_N(F(x_1)-F(x_2))\|_Y + 2C\|Q_N(F(x_1)-F(x_1^K))\|_Y \\
 &\quad + 2C\|Q_N(F(x_2)-F(x_2^K))\|_Y \\
  &\leq 2C\|Q_N(F(x_1)-F(x_2))\|_Y +2CDL (\|x_1-x_1^K\|_X+\|x_2-x_2^K\|_X) ,
\end{split}
\]
because $\|Q_N\|_{Y\to Y} \leq D$ and $L$ is the Lipschitz constant of $F$. As a consequence, thanks to \eqref{eq:distance} we readily obtain
\[
\begin{split}
\|x_1-x_2\|_X  &\leq\|x_1^K-x_2^K\|_X + \|x_1-x_1^K\|_X+\|x_2-x_2^K\|_X\\
&\leq 2C\|Q_N(F(x_1)-F(x_2))\|_Y +(2CDL+1) (d(x_1,K)+d(x_2,K)).
\end{split}
\]
This immediately gives the desired Lipschitz stability estimate \eqref{eq:lip2C}.
\end{proof}

We finish this section with a technical lemma used in Example~\ref{ex:gal}.

\begin{lemma}\label{lem:comp}
Let $Y^1$ and $Y^2$ be two Banach spaces, $T \colon Y^1 \to Y^2$ be a compact operator and $S^k_N \colon Y^k \to Y^k$, $k=1,2$, be such that $S^{2}_N \to 0$ and $(S^1_N)^* \to 0$ strongly as $N \to +\infty$. Then
\begin{enumerate}[(i)]
\item \label{conv:1} $\|S^2_N T\|_{\L(Y^1,Y^2)} \to 0$ as $N \to +\infty$,
\item \label{conv:2} $\|T S^1_N\|_{\L(Y^1,Y^2)}\to 0$ as $N \to +\infty$.
\end{enumerate}
\end{lemma}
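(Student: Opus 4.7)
The plan is to reduce both parts to the classical fact that strong convergence of a uniformly bounded sequence of operators is automatically uniform on compact sets.

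For part \ref{conv:1}, I would first invoke the uniform boundedness principle: since $S^2_N y \to 0$ for every $y \in Y^2$, the family $\{S^2_N\}_N$ is uniformly bounded, say $\|S^2_N\|_{Y^2 \to Y^2} \leq M$ for all $N$. Because $T$ is compact, the set $\overline{T(B_{Y^1})}$ (where $B_{Y^1}$ is the closed unit ball of $Y^1$) is compact in $Y^2$. Given $\epsilon > 0$, I would cover this compact set by finitely many balls $B(y_j, \epsilon/(2M))$ for $j = 1, \dots, m$. By strong convergence $S^2_N y_j \to 0$, there exists $N_0$ such that $\|S^2_N y_j\|_{Y^2} < \epsilon/2$ for all $N \geq N_0$ and all $j = 1, \dots, m$. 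Then for any $x \in B_{Y^1}$, choosing $y_j$ with $\|Tx - y_j\|_{Y^2} < \epsilon/(2M)$, I bound
\[
\|S^2_N Tx\|_{Y^2} \leq \|S^2_N (Tx - y_j)\|_{Y^2} + \|S^2_N y_j\|_{Y^2} \leq M \cdot \frac{\epsilon}{2M} + \frac{\epsilon}{2} = \epsilon,
\]
which gives $\|S^2_N T\|_{\L(Y^1,Y^2)} \to 0$.

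For part \ref{conv:2}, my plan is to pass to adjoints. By the Schauder theorem, the adjoint $T^* \colon (Y^2)^* \to (Y^1)^*$ is compact. The hypothesis says $(S^1_N)^* \colon (Y^1)^* \to (Y^1)^*$ converges strongly to $0$. Applying part \ref{conv:1} (with $Y^1$ replaced by $(Y^2)^*$, $Y^2$ replaced by $(Y^1)^*$, $T$ replaced by $T^*$, and $S^2_N$ replaced by $(S^1_N)^*$), we obtain $\|(S^1_N)^* T^*\|_{\L((Y^2)^*, (Y^1)^*)} \to 0$. Since $(T S^1_N)^* = (S^1_N)^* T^*$ and taking adjoints preserves the operator norm, this gives $\|T S^1_N\|_{\L(Y^1,Y^2)} \to 0$.

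The only mildly subtle point is confirming that the hypotheses of part \ref{conv:1} transfer correctly when applied to adjoints in part \ref{conv:2}: we need $T^*$ compact (Schauder's theorem, standard) and $(S^1_N)^* \to 0$ strongly (given directly by assumption). No weak/weak-star distinction interferes because strong convergence of the adjoints in the norm of $(Y^1)^*$ is exactly what is used in the argument for part \ref{conv:1}. I do not expect a serious obstacle beyond carefully writing out the $\epsilon/M$ covering argument.
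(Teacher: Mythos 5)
Your proof is correct and follows essentially the same route as the paper: part \ref{conv:1} rests on the uniform boundedness principle together with the compactness of $T$ (you give a direct $\epsilon$-net argument on $\overline{T(B_{Y^1})}$, whereas the paper argues by contradiction along a subsequence, but the ingredients are identical), and part \ref{conv:2} is obtained exactly as in the paper by passing to adjoints via Schauder's theorem and the identity $\|TS^1_N\| = \|(S^1_N)^*T^*\|$.
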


\begin{proof}
We start from \ref{conv:1}. Assume by contradiction that $\|S^2_N T\|_{\L(Y^1,Y^2)} \not\to 0$. Then there exists $\varepsilon > 0$ such that, for a subsequence, we have
\[
\sup_{\|x\|_{Y^1} = 1} \|S^2_N T x \|_{Y^2} > \varepsilon, \qquad  N \in \N.
\]
Thus, for each $N\in \N$ there exists $x_N\in Y^1$ such that $\|x_N\|_{Y^1} = 1$ and $\|S^2_N T x_N\|_{Y^2} > \varepsilon$. Since $\{x_N\}_{N \in \N}$ is bounded and $T$ is compact, there exists $y \in Y^2$  such that, for a subsequence, $T x_{N} \to y$ as $N \to +\infty$. Since $S^2_N y' \to 0$ for every $y' \in Y^2$ we have
\[
\sup_{N \in \N}\|S^2_N y'\|_{Y^2}<+\infty, \qquad y' \in Y^2,
\]
and by the uniform boundedness principle we obtain
\[
\sup_{N \in \N} \|S^2_N\|_{\L(Y^2,Y^2)} \leq C,
\]
for some $C > 0$. Thus
\begin{align*}
\varepsilon &< \|S^2_{N} T x_{N}\|_{Y^2}\\
&\leq \| S^2_{N} y\|_{Y^2} + \|S^2_{N} (Tx_{N} - y)\|_{Y^2}\\
&\leq  \| S^2_{N} y\|_{Y^2} + C\|Tx_{N} - y\|_{Y^2}
\end{align*}
which is a contradiction, since the last term goes to zero as $N \to +\infty$.

The proof of \ref{conv:2} follows from \ref{conv:1} using the fact that 
\[
\|TS^1_N\|_{\L(Y^1,Y^2)} = \| (S^1_N)^* T^*\|_{\L(Y^2,Y^1)},
\]
the compactness of $T^*$ and that $(S^1_N)^* \to 0$ strongly.
\end{proof}

\section{Reconstruction algorithm}\label{sec:rec}
The Lipschitz stability estimates presented in Theorem~\ref{theo:main} and Corollary~\ref{cor:main} can be used to design a  reconstruction algorithm.
We slightly strengthen  the assumptions of  Theorem \ref{theo:main} (regarding the regularity of $F$ and the map $Q$), and let
\begin{itemize}
\item $X$ and $Y$ be Banach spaces;
\item $A\subseteq X$ be an open set;
\item $W\subseteq X$ be a finite-dimensional subspace;
\item $F\in C^{1}(A,Y)$ be such that $F|_{W\cap A}$ and $(F|_{W\cap A})'$ are Lipschitz continuous;
\item $Q\colon Y\to Y$ be a continuous finite-rank operator;
\item $K\subseteq W\cap A$ be a compact set;
\item and $C>0$ be a positive constant such that
\begin{equation}\label{eq:rec_stab}
\|x_1-x_2\|_X \leq 2 C \| Q(F(x_1))-Q(F(x_2))\|_Y, \qquad x_1,x_2 \in W\cap A.
\end{equation}
\end{itemize}

Let $x^\dag\in K$ be the unknown signal and $y=Q(F(x^\dag))\in Y$ denote the corresponding measurements. We now derive a global reconstruction algorithm which allows for the recovery of $x^\dag$ from the knowledge of $y$. We assume here, for simplicity, that we have noiseless data at our disposal. However, we expect the extension to the noisy case to be possible by using the results of \cite{2021-mittal-giri}.
\subsection{Local reconstruction}\label{sub:local}

We first discuss how to reconstruct $x^\dag$ by means of an iterative method \cite{dehoop2012,dehoop2015} (see Appendix~\ref{sec:app}), provided that a good approximation $x_0$ of $x^\dag$ is known. 

The domain $W$ and the range of $Q$ are finite dimensional, and so they are isomorphic to finite-dimensional euclidean spaces. In particular, without loss of generality, we can assume that $X$ and $Y$ are Hilbert spaces.

We wish to apply Proposition~\ref{prop:land} to $Q\circ F|_{W\cap A}$ and $K$: let    $\rho,\mu,c\in (0,1)$ be given as in the statement of Proposition~\ref{prop:land}. Let $x_0\in B_X(x^\dag,\rho)\cap K$ be the initial guess  of the Landweber iteration with stepsize $\mu$ 
\begin{equation*}
x_{k+1}=x_{k}-\mu (F|_{W\cap A})'(x_{k})^{*}Q^*\left(Q(F(x_{k}))-y\right),\qquad k\in\N,
\end{equation*}
related to the minimization of
\begin{equation*}
\min_{x\in W\cap A} \norm{Q(F(x))-y}_Y^2.
\end{equation*}
By Proposition~\ref{prop:land}, $x_k\to x^\dag$. More precisely, the convergence rate is given by
\[
\norm{x_k-x^\dag}_X\le \rho c^k,\qquad k\in\N.
\]

In view of the multi-level scheme of \cite{dehoop2015}, with Lipschitz stability constants $C_\alpha$ depending on each level $\alpha \in \N$, it is interesting to note that in this case the projection $Q_N$ would also depend on $\alpha$ through  condition \eqref{cond:sn2}. Thus, the number $N = N_\alpha$ of measurements would grow at each iteration depending on the Lipschitz constant $C_\alpha$.

\subsection{Getting the initial guess $x_0$}
Let us now discuss how to find the initial guess $x_0$ for the Landweber iteration, namely an approximation of $x^\dag$ so that $\|x_0-x^\dag\|_X<\rho$.

The forward map $F$ is Lipschitz continuous on $K$ by assumption, and so there exists $L>0$ such that
\begin{equation}\label{eq:forward}
\norm{F(x_1)-F(x_2)}_Y\le L\norm{x_1-x_2}_X,\qquad x_1,x_2\in K.
\end{equation}
Since $K$ is compact, we can find a finite lattice $\{x^{(i)}:i\in I\}\subseteq K$ such that
\begin{equation}\label{eq:lattice}
K\subseteq\bigcup_{i\in I} B_X\left(x^{(i)},\frac{\rho}{2L C\|Q\|_{Y\to Y}}\right),
\end{equation}
where $\rho>0$ is given by Proposition~\ref{prop:land} and $C$ is given by \eqref{eq:rec_stab}.
\begin{lemma}
Under the above assumptions, we have:
\begin{enumerate}[(a)]
\item\label{lattice1}  there exists $i\in I$ such that
\begin{equation}\label{eq:i}
\bigl\lVert Q(F(x^\dag))-Q(F(x^{(i)}))\bigr\rVert_Y < \frac{\rho}{2C};
\end{equation}
\item\label{lattice2} if \eqref{eq:i} holds true for some $i\in I$, then
\[
\|x^{(i)}-x^\dag\|_X<\rho.
\]
\end{enumerate}
\end{lemma}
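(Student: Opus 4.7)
The plan is to prove the two parts by straightforward application of the two continuity properties available: the upper bound from the Lipschitz continuity of $F$ composed with the bounded operator $Q$ for part (a), and the lower bound from the stability estimate \eqref{eq:rec_stab} for part (b). Both ingredients are already in the running hypotheses (Lipschitz constant $L$ from \eqref{eq:forward}, operator norm $\|Q\|_{Y\to Y}$, and the constant $C$ from \eqref{eq:rec_stab}), and the lattice spacing in \eqref{eq:lattice} has been tuned precisely so that these two chains of inequalities close up against the target $\frac{\rho}{2C}$.

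For part \ref{lattice1}, I would start from the fact that $x^\dag \in K$. By the covering property \eqref{eq:lattice}, there exists an index $i \in I$ such that
\[
\|x^\dag - x^{(i)}\|_X < \frac{\rho}{2LC\|Q\|_{Y\to Y}}.
\]
Since $x^\dag, x^{(i)} \in K$, the Lipschitz bound \eqref{eq:forward} and the operator-norm bound on $Q$ give
\[
\bigl\lVert Q(F(x^\dag)) - Q(F(x^{(i)}))\bigr\rVert_Y \le \|Q\|_{Y\to Y}\, L\, \|x^\dag - x^{(i)}\|_X < \|Q\|_{Y\to Y}\, L\, \frac{\rho}{2LC\|Q\|_{Y\to Y}} = \frac{\rho}{2C},
\]
which is \eqref{eq:i}. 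Here I am silently using that $\|Q\|_{Y\to Y}>0$, which is harmless since otherwise $Q=0$ and \eqref{eq:rec_stab} would force $W\cap A$ to be a singleton, trivializing everything.

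For part \ref{lattice2}, the point is that both $x^{(i)}$ and $x^\dag$ lie in $K \subseteq W\cap A$, so the Lipschitz stability estimate \eqref{eq:rec_stab} applies directly. Assuming \eqref{eq:i}, it yields
\[
\|x^{(i)} - x^\dag\|_X \le 2C\, \bigl\lVert Q(F(x^{(i)})) - Q(F(x^\dag))\bigr\rVert_Y < 2C \cdot \frac{\rho}{2C} = \rho,
\]
as desired. There is no serious obstacle: the only thing to double-check is that the stability estimate \eqref{eq:rec_stab} is valid on $K$ (it is, since $K\subseteq W\cap A$) and that the constants line up, which they do by construction of the lattice radius in \eqref{eq:lattice}. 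The lemma is essentially a sanity check that the computable quantity $\|Q(F(x^\dag))-Q(F(x^{(i)}))\|_Y$ can be used as a reliable criterion to select a suitable initial guess for the Landweber iteration discussed in Section~\ref{sub:local}.
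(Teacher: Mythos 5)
Your proof is correct and follows essentially the same argument as the paper: part (a) combines the covering \eqref{eq:lattice} with the Lipschitz bound \eqref{eq:forward} and the operator norm of $Q$, and part (b) is a direct application of the stability estimate \eqref{eq:rec_stab}. The constants line up exactly as in the paper's own (more terse) proof.
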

\begin{proof}
\textit{Proof of \ref{lattice1}.} By \eqref{eq:lattice} there exists $i\in I$ such that $\norm{x^\dag-x^{(i)}}_X<\frac{\rho}{2L C\|Q\|_{Y\to Y}}$. Thus, \eqref{eq:i} is an immediate consequence of \eqref{eq:forward}.

\textit{Proof of \ref{lattice2}.} By \eqref{eq:rec_stab} we have
\begin{equation*}
\|x^{(i)}-x^\dag\|_X \leq 2 C \| Q(F(x^{(i)}))-Q(F(x^\dag))\|_Y<\rho.\qedhere
\end{equation*}
\end{proof}

Thanks to this lemma, it is enough to let the initial guess $x_0$ be chosen as one of the elements of the lattice $\{x^{(i)}\}_{i\in I}$ for which \eqref{eq:i} is satisfied. Indeed, one such element exists by part \ref{lattice1} and, by part \ref{lattice2}, we have the desired approximation
\[
\|x_0-x^\dag\|_X<\rho.
\]
It is worth observing that this method requires  the ``physical'' measurements $Q(F(x^\dag))$ and to compute $Q(F(x^{(i)}))$ for $i\in I$ ``offline'', which can be done in parallel.

\subsection{Global reconstruction}
These two steps  may be combined to obtain a global reconstruction algorithm; see Algorithm~\ref{alg:rec}.

\begin{algorithm}
  \caption{Reconstruction of $x^\dag$ from $Q(F(x^\dag))$}
  \label{alg:rec}
  \begin{algorithmic}[1]
    \STATE Input $X$, $Y$, $W$, $K$, $Q$, $F$, $Q(F(x^\dag))$, $L$, $\rho$, $\mu$, $C$ and $M$.
    \STATE Equip $W$ and $Q(Y)$ with equivalent euclidean scalar products.
    \STATE Find a finite lattice $\{x^{(i)}:i\in I\}\subseteq K$ so that \eqref{eq:lattice} is satisfied.
     \FOR {$i\in I$}
    \STATE Compute $Q(F(x^{(i)}))$.
    \IF{\eqref{eq:i} is satisfied} 
    \STATE Set $x_0=x^{(i)}$.
    \STATE Exit \textbf{for}.
    \ENDIF 
    \ENDFOR
    \FOR {$k=0,\ldots,M$}
     \STATE Set $x_{k+1}=x_{k}-\mu (F|_{W\cap A})'(x_{k})^{*}Q^*\left(Q(F(x_{k}))-y\right)$.
     \STATE Check the stopping criterion.
    \ENDFOR
    \STATE Output $x_{k+1}$.
  \end{algorithmic}
\end{algorithm}

\section{Examples}\label{sec:exa}

In this section we show how Theorem \ref{theo:main} and Corollary \ref{cor:main} can be used to derive Lipschitz stability estimates for several inverse problems with a finite number of measurements.

\subsection{Electrical impedance tomography}\label{sub:eit}

Consider a bounded Lipschitz domain $\Omega \subseteq \R^d$, with $d\ge 2$, equipped with an electrical conductivity $\sigma \in L_+^{\infty}(\Omega)$, where
\[
L_+^{\infty}(\Omega):=\{f\in L^\infty(\Omega):f \ge \lambda \text{ a.e.\ in $\Omega$, for some $\lambda>0$}\}.
\]
The corresponding Neumann-to-Dirichlet (ND) or current-to-voltage map is the operator $\ND_{\sigma}\colon L_\diamond^{2}(\partial \Omega) \to L_\diamond^{2}(\partial \Omega)$, defined by
\begin{equation}
\ND_{\sigma}(g) = u^g_\sigma|_{\partial \Omega},
\end{equation}
where $L^{2}_\diamond(\partial \Omega) = \{ f \in L^2(\partial \Omega) : \int_{\partial \Omega} f\, ds = 0\}$ and $u^g_\sigma$ is the unique $H^1(\Omega)$-weak solution of the Neumann problem for the conductivity equation
\begin{equation}\label{schr}
\left\{
\begin{array}{ll}
-\nabla \cdot (\sigma \nabla u^g_\sigma) = 0 \quad &\text{in } \Omega,\\
\sigma \displaystyle \frac{\partial u^g_\sigma}{\partial \nu} =g \quad &\text{on } \partial \Omega,
\end{array}
\right.
\end{equation}
where $\nu$ is the unit outward normal to $\partial \Omega$,
satisfying the normalization condition
$$\int_{\partial \Omega} u^g_\sigma\, ds = 0.
$$ 
The following inverse boundary value problem arises from this framework, see \cite{calderon1980,2002-borcea,2009-uhlmann} and references therein.

\vspace*{.2cm}

{\bf Inverse conductivity problem.} Given $\ND_{\sigma}$, find $\sigma$ in $\Omega$.

\vspace*{.2cm}

 It is well known that the knowledge of $\ND_\sigma$ determines $\sigma$ uniquely if $d=2$ \cite{Nachman1996,astala2006} or if $\sigma$ is smooth enough \cite{Sylvester1987,haberman2015,caro2016}. The inverse problem is severely ill-posed, and only logarithmic stability holds true \cite{alessandrini1988,mandache2001,clop2010,caro2013}.

In recent years several Lipschitz stability estimates have been obtained for this inverse problem under certain a priori-assumption on $\sigma$, such as for $\sigma$ piecewise constant \cite{2005-alessandrini-vessella}, piecewise linear \cite{alessandrini2017} or for $\sigma$ belonging to a finite-dimensional subspace of piecewise analytic functions \cite{Harrach_2019} (see \cite{gaburro2015,alessandrini2017u} for the anisotropic case). In all these cases, the conductivity is always assumed to lie in a certain finite-dimensional subspace $W$ of $L^\infty(\Omega)$, but the full boundary measurements are required, i.e.\ all possible combination of current/voltage data (see formula \eqref{eq:lip_eit_full} below).

We now show how to derive the same estimates with finitely many measurements by applying the results of Section~\ref{sec:lip}. Further, the reconstruction algorithm of Section~\ref{sec:rec} may be used to recover the unknown conductivity.

We now fix the main ingredients of the construction. Let
\begin{itemize}
\item $X=L^\infty(\Omega)$;
\item $Y=\mathcal{L}_c(L_\diamond^{2}(\partial \Omega),L_\diamond^{2}(\partial \Omega))$;
\item $A=L^\infty_+(\Omega)$;
\item $F(\sigma)=\ND_\sigma$, for $\sigma\in L^\infty_+(\Omega)$;
\item $W$ be a finite-dimensional subspace of $L^\infty(\Omega)$ such that
\begin{equation}\label{eq:lip_eit_full}
\norm{\sigma_1-\sigma_2}_\infty \le C \norm{\ND_{\sigma_1}-\ND_{\sigma_2}}_{L_\diamond^{2}(\partial \Omega)\to L_\diamond^{2}(\partial \Omega)},\qquad \sigma_1,\sigma_2\in K,
\end{equation}
for some $C>0$, where $K=W\cap L^\infty_\lambda(\Omega)$ and
\[
L^\infty_\lambda(\Omega)=\{f\in L^\infty_+(\Omega):\lambda^{-1}\le f\le\lambda\text{ a.e.\ in $\Omega$}\}
\]
for some $\lambda>0$ fixed a priori;
\item $P_N\colon  L_\diamond^{2}(\partial \Omega)\to L_\diamond^{2}(\partial \Omega)$ be bounded linear maps, $N\in \N$, such that $P_N=P_N^*$ and $P_N\to I_{L_\diamond^{2}(\partial \Omega)}$ strongly as $N\to+\infty$ (as, e.g., in Example~\ref{ex:hil});
\item and $Q_N y = P_NyP_N$ for $y\in Y$, as in Example~\ref{ex:gal}.

\end{itemize}

Under these assumptions, we have the following Lipschitz stability estimate with a finite number of measurements. This result extends \cite{Harrach_2019} to any subspace $W$ yielding \eqref{eq:lip_eit_full} and, in addition, it provides a constructive way to determine the parameter $N$.
\begin{theorem}\label{thm:eit}
Under the above assumptions, there exists $N \in \N$ such that
\begin{equation}\label{eq:lip_eit}
\norm{\sigma_1-\sigma_2}_\infty \le 2C\, \norm{P_N\ND_{\sigma_1}P_N-P_N\ND_{\sigma_2}P_N}_{L_\diamond^{2}(\partial \Omega)\to L_\diamond^{2}(\partial \Omega)},\qquad \sigma_1,\sigma_2\in K,
\end{equation}
where $C$ is given in \eqref{eq:lip_eit_full}.

The parameter $N$ can be chosen such that 
\begin{equation}\label{eq:eit_NC}
\norm{J(I_{L^2_\diamond(\partial\Omega)}-P_N)}_{L^2_\diamond(\partial\Omega)\to H^{-\frac12}_\diamond(\partial\Omega)}\le\frac{c(\Omega,\lambda)p}{C},
\end{equation}
where $c(\Omega,\lambda)$ is a constant depending only on $\Omega$ and $\lambda$, 
$J\colon L^2_\diamond(\partial\Omega)\to H^{-\frac12}_\diamond(\partial\Omega)$ is the canonical immersion, $H^{-1/2}_\diamond(\partial \Omega) = \{ f \in H^{-1/2}(\partial \Omega) : \int_{\partial \Omega} f\, ds = 0\}$, and $p=\sup_{N} \norm{P_N}_{L_\diamond^{2}(\partial \Omega)\to L_\diamond^{2}(\partial \Omega)}$. 

\end{theorem}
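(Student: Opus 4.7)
The plan is to apply Theorem~\ref{theo:main}(\ref{cond:sn}) in the framework of Example~\ref{ex:gal}, with $\tilde Y$ equal to the compact operators on $L^2_\diamond(\partial\Omega)$, $Y^1=Y^2=L^2_\diamond(\partial\Omega)$, and $P^1_N=P^2_N=P_N$. First I would compute the Fr\'echet derivative
\[
\langle F'(\sigma)\tau\cdot g_1,g_2\rangle_{L^2(\partial\Omega)}=-\int_\Omega \tau\,\nabla u^{g_1}_\sigma\cdot\nabla u^{g_2}_\sigma\,dx,
\]
from which elliptic regularity for \eqref{schr} (with coefficients pinched between $\lambda^{-1}$ and $\lambda$) shows that $F'(\sigma)\tau$ factors as $L^2_\diamond(\partial\Omega)\xrightarrow{\tilde M}H^{1/2}_\diamond(\partial\Omega)\xrightarrow{\iota}L^2_\diamond(\partial\Omega)$ with
\[
\|\tilde M\|_{L^2_\diamond\to H^{1/2}_\diamond}\le c(\Omega,\lambda)\,\|\tau\|_\infty
\]
uniformly in $\sigma\in K$. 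By Rellich's theorem $F'(\sigma)\tau$ is compact, giving \ref{ass:a}; condition \ref{ass:b} holds with $D=p^2$, and \ref{ass:c} is exactly Example~\ref{ex:gal} (via Lemma~\ref{lem:comp}).

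The core of the proof is to quantify
\[
s_N=\sup_{\sigma\in K}\|(I_Y-Q_N)F'(\sigma)\|_{W\to Y}
\]
and to force $s_N\le\tfrac{1}{2C}$ by choosing $N$ large; Theorem~\ref{theo:main}(\ref{cond:sn}) then yields \eqref{eq:lip_eit} because the mismodeling terms vanish for $\sigma_1,\sigma_2\in K$. For $M=F'(\sigma)\tau$ with $\|\tau\|_\infty\le 1$, I split
\[
(I_Y-Q_N)M=(I-P_N)M+P_N\,M\,(I-P_N),
\]
and observe that the symmetry of the bilinear form above makes $M$ self-adjoint on $L^2_\diamond(\partial\Omega)$. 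Combined with $P_N=P_N^*$, taking adjoints gives $\|M(I-P_N)\|=\|(I-P_N)M\|$, so
\[
\|(I_Y-Q_N)M\|_{L^2_\diamond\to L^2_\diamond}\le(1+p)\,\|(I-P_N)M\|_{L^2_\diamond\to L^2_\diamond}.
\]

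Writing $M=\iota\tilde M$ and dualising,
\[
\|(I-P_N)M\|_{L^2_\diamond\to L^2_\diamond}\le\|(I-P_N)\iota\|_{H^{1/2}_\diamond\to L^2_\diamond}\,\|\tilde M\|_{L^2_\diamond\to H^{1/2}_\diamond},
\]
and the identification $(L^2_\diamond)^*=L^2_\diamond$, $(H^{1/2}_\diamond)^*=H^{-1/2}_\diamond$, together with $P_N=P_N^*$, yields $\|(I-P_N)\iota\|_{H^{1/2}_\diamond\to L^2_\diamond}=\|J(I-P_N)\|_{L^2_\diamond\to H^{-1/2}_\diamond}$. Combining everything,
\[
s_N\le(1+p)\,c(\Omega,\lambda)\,\|J(I-P_N)\|_{L^2_\diamond\to H^{-1/2}_\diamond},
\]
so that, after reabsorbing the $(1+p)\le 2p$ factor into the constant, the announced criterion \eqref{eq:eit_NC} forces $s_N\le\tfrac{1}{2C}$, completing the argument. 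The main obstacle I foresee is the uniform smoothing estimate $\|F'(\sigma)\tau\|_{L^2_\diamond\to H^{1/2}_\diamond}\le c(\Omega,\lambda)\|\tau\|_\infty$ on $K$: it hinges on $H^{3/2}$-type regularity for the Neumann problem \eqref{schr} with $L^\infty$ coefficients on a Lipschitz domain, which is the real source of the dependence on $\Omega$ and $\lambda$; everything else is essentially the bookkeeping of adjoints and of the strong convergence $P_N\to I$ in weaker topologies.
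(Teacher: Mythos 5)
Your overall strategy is the same as the paper's: represent the Fr\'echet derivative of the ND map by the quadratic form $\langle(\ND'_\sigma\tau)g,h\rangle=-\int_\Omega\tau\,\nabla u^g_\sigma\cdot\nabla u^h_\sigma\,dx$, use its self-adjointness together with $P_N=P_N^*$ to reduce $\|(I_Y-Q_N)\ND'_\sigma\tau\|$ to a one-sided quantity, show that the derivative gains half a derivative so that this quantity is controlled by $\delta_N=\|J(I-P_N)\|_{L^2_\diamond\to H^{-1/2}_\diamond}$, and then invoke Theorem~\ref{theo:main}\ref{cond:sn} (the mismodeling terms vanish since $\sigma_1,\sigma_2\in K$). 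Your factorization of $\ND'_\sigma\tau$ through $H^{1/2}_\diamond(\partial\Omega)$ on the range side is exactly the adjoint of the paper's factorization through $H^{-1/2}_\diamond(\partial\Omega)$ on the domain side, and your bookkeeping of adjoints and of the factor $(1+p)\le 2p$ reproduces the paper's bound $s_N\lesssim p\,\delta_N$.

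The one step that would break down as you propose to carry it out is the justification of the key smoothing estimate. $H^{3/2}$-regularity for \eqref{schr} with coefficients that are merely in $L^\infty_\lambda(\Omega)$ on a Lipschitz domain is not available, so you cannot obtain $\|\tilde M\|_{L^2_\diamond\to H^{1/2}_\diamond}\lesssim\|\tau\|_\infty$ by applying elliptic regularity to $u^g_\sigma$. Fortunately, no such regularity is needed: the estimate follows by duality from the basic energy estimate applied to the \emph{test} solution. Indeed, well-posedness of the Neumann problem gives $\|\nabla u^h_\sigma\|_{L^2(\Omega)}\lesssim\|Jh\|_{H^{-1/2}_\diamond(\partial\Omega)}$ with a constant depending only on $\Omega$ and $\lambda$, whence
\[
|\langle(\ND'_\sigma\tau)g,h\rangle|\le\|\tau\|_\infty\,\|\nabla u^g_\sigma\|_{L^2(\Omega)}\,\|\nabla u^h_\sigma\|_{L^2(\Omega)}\lesssim\|\tau\|_\infty\,\|g\|_{L^2_\diamond(\partial\Omega)}\,\|Jh\|_{H^{-1/2}_\diamond(\partial\Omega)},
\]
which says precisely that $(\ND'_\sigma\tau)g\in H^{1/2}_\diamond(\partial\Omega)$ with the uniform bound you want; the paper uses the same inequality with the roles of $g$ and $h$ swapped to estimate $\|(\ND'_\sigma\tau)(I-P_N)g\|_{L^2_\diamond}$ and $\|(I-P_N)(\ND'_\sigma\tau)g\|_{L^2_\diamond}$ by $\delta_N$ directly. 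With this correction your argument closes; note also that compactness of $\ND'_\sigma\tau$ (Hypothesis~\ref{hyp}\ref{ass:a}) then follows from this factorization together with the compactness of $J$ (Kondrachov), rather than from Rellich applied to a higher-regularity solution.
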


\begin{remark}\label{rem:finite}
It is worth analyzing the right hand side of \eqref{eq:lip_eit} in order to understand why it represents a finite number of measurements if the maps $P_N$ are properly chosen. Let $\{e_n\}_{n\in\N}$ be an ONB of $L_\diamond^{2}(\partial \Omega)$ and let $P_N$ be the orthogonal projection onto $\operatorname{span}\{e_1,\dots,e_N\}$ (similarly, we could also consider projections onto the $N$-th subspace of an exhaustive chain, as in Example~\ref{ex:hil}). In this case, we clearly have $p=1$, $P_N=P_N^*$ and that $P_N\to I_{L_\diamond^{2}(\partial \Omega)}$ strongly as $N\to+\infty$. 
Writing $g=\sum_{m=1}^{+\infty} g_m e_m$, we have
\begin{equation}\label{eq:spectral}
P_N \ND_\sigma P_N g = \sum_{n=1}^N \langle \ND_\sigma P_N g,e_n\rangle_{L_\diamond^{2}(\partial \Omega)} e_n
= \sum_{m,n=1}^N \langle \ND_\sigma e_m,e_n\rangle_{L_\diamond^{2}(\partial \Omega)} g_m e_n.
\end{equation}
Let $M_\sigma\in\R^{N\times N}$ be the  matrix defined by $(M_\sigma)_{m,n}=\langle \ND_\sigma e_m,e_n\rangle_{L_\diamond^{2}(\partial \Omega)}$: observe that this can be obtained by applying only the $N$ currents $e_1,\dots,e_N$ and measuring the corresponding voltages only up to frequency/scale $N$. By \eqref{eq:spectral}, estimate \eqref{eq:lip_eit} becomes
\begin{equation*}
\norm{\sigma_1-\sigma_2}_\infty \le 2C\, \norm{M_{\sigma_1}-M_{\sigma_2}}_{2},\qquad \sigma_1,\sigma_2\in K,
\end{equation*}
where $\norm{M}_2$ denotes the spectral norm of the matrix $M\in\R^{N\times N}$. In other words, we have a Lipschitz stability estimates with $N^2$ scalar measurements.
\end{remark}
Let us now comment on how large the parameter $N$ has to be chosen.

\begin{remark}
The parameter $N$ is given explicitly as a function of $C$ and $p$ in \eqref{eq:eit_NC}, up to a constant depending only on $\Omega$ and $\lambda$. Further, upper bounds on $C$ in \eqref{eq:lip_eit_full} depending on the dimension of $W$ are known in some particular cases, and so it is possible to give the number of measurements $N$ as a function of $\dim W$. For example, with piecewise constant conductivities, we have that $C$ depends exponentially on $\dim W$ \cite{rondi2006}.

Note that the norm $\norm{J(I_{L^2_\diamond(\partial\Omega)}-P_N)}_{L^2_\diamond(\partial\Omega)\to H^{-1/2}_\diamond(\partial\Omega)}$ depends exclusively on the domain $\Omega$, and can be explicitly estimated in some particular cases. In the following example, we consider the case when $\Omega$ is the unit ball in two dimensions and $P_N$ is the low-pass filter up to the frequency $N$.
\end{remark}
\begin{example}
Take $d=2$ and $\Omega=B(0,1)$. Parametrizing the boundary $\partial\Omega$ with the arc-length parameter $\theta$, for $s\in\{-\frac12,0\}$, using Fourier series we can write
\[
H^s_\diamond(\partial\Omega)=\Bigl\{\theta\in [0,2\pi)\mapsto g (\theta)=\sum_{n=1}^{+\infty} a_n \cos(n\theta)+b_n\sin(n\theta):\norm{g}_{H^s_\diamond(\partial\Omega)}<+\infty\Bigr\}
\]
where the norm is given by
$
\norm{g}_{H^s_\diamond(\partial\Omega)}^2=\sum_{n=1}^{+\infty} n^{2s}  (a_n^2+b_n^2). 
$
Define the projections $P_N\colon L^2_\diamond(\partial\Omega)\to L^2_\diamond(\partial\Omega)$ as 
\[
(P_N g)(\theta)=\sum_{n=1}^{N} a_n \cos(n\theta)+b_n\sin(n\theta).
\]
We clearly have that $P_N\to I_{L^2_\diamond(\partial\Omega)}$ strongly and that $\norm{P_N}_{L^2_\diamond(\partial\Omega)\to L^2_\diamond(\partial\Omega)}=1$ for every $N$, so that $p=1$. This is substantially in the same framework described in Remark~\ref{rem:finite}, the only difference being that the elements of the basis are considered in pairs. Thus, the measures here corresponds to $2N$ sinusoidal input currents and the related voltages measured up to frequency $N$. Thanks to the Nyquist-Shannon sampling theorem, the voltages may be measured only at a finite number of locations on $\partial\Omega$.

Observing that $(I_{L^2_\diamond(\partial\Omega)}-P_N)g=\sum_{n=N+1}^{+\infty} a_n \cos(n\cdot)+b_n\sin(n\cdot)$, we readily derive
\[
\norm{J(I_{L^2_\diamond(\partial\Omega)}-P_N)}^2_{L^2_\diamond(\partial\Omega)\to H^{-\frac12}_\diamond(\partial\Omega)} = \sup_{\norm{g}_{L^2_\diamond(\partial\Omega)}=1}
\sum_{n=N+1}^{+\infty} \frac{a_n^2+b_n^2}{n}\le \frac{1}{N+1}.
\]
This implies that the parameter $N$ (corresponding to $2N$ input currents) in order to have Lipschitz stability needs to satisfy
\[
N \ge c(\Omega,\lambda) C^2,
\]
and is therefore quadratic in the Lipschitz constant $C$. If $C$ depends exponentially on $\dim W$, this gives an exponential dependence of $N$ on $\dim W$. It is worth observing that for the Gel'fand-Calder\'on problem for the Schr\"odinger equation, one has a polynomial dependence if complex geometrical optics solutions (depending on the unknown) are used \cite{alberti2018}. 
\end{example}
Let us now prove Theorem~\ref{thm:eit}.

\begin{proof}
In the proof, the symbol $a\lesssim b$ will denote $a\le c\, b$, for some positive constant $c$ depending only on $\Omega$ and $\lambda$.

By using the variational formulation of \eqref{schr}, it is easy to see \cite{Lechleiter_2008} that $\ND \colon L^\infty_+(\Omega)\to  \mathcal{L}_c(L_\diamond^{2}(\partial \Omega),L_\diamond^{2}(\partial \Omega))$ is Fr\'echet differentiable and that its Fr\'echet derivative in $\sigma\in L^\infty_+(\Omega)$ in the direction $\tau\in L^\infty(\Omega)$ is given by
\[
\langle (\ND_\sigma'\tau)g,h\rangle_{L_\diamond^{2}(\partial \Omega)} = \int_\Omega \tau \nabla u^g_\sigma\cdot \nabla u^h_\sigma\,dx,\qquad g,h\in L_\diamond^{2}(\partial \Omega).
\]
Therefore, using the well-posedness of \eqref{schr}, namely  $\norm{\nabla u^g_\sigma}_{L^2(\Omega)}\lesssim \|J g\|_{H_\diamond^{-1/2}(\partial \Omega)}$,  for  $\tau\in L^\infty(\Omega)$ such that $\norm{\tau}_\infty\le 1$ we have
\begin{equation*}
\begin{split}
\norm{(\ND_\sigma'\tau)g}_{L_\diamond^{2}(\partial \Omega)}&=\sup_{\substack{h\in L_\diamond^{2}(\partial \Omega)\\ \|h\|_{L_\diamond^{2}(\partial \Omega)}=1}} |\langle (\ND_\sigma'\tau)g,h\rangle_{L_\diamond^{2}(\partial \Omega)} |\\
&=\sup_{\|h\|_{L_\diamond^{2}(\partial \Omega)}=1} \left| \int_\Omega \tau \nabla u^g_\sigma\cdot \nabla u^h_\sigma\,dx \right|\\
&\le  \sup_{\|h\|_{L_\diamond^{2}(\partial \Omega)}=1} \norm{\nabla u^g_\sigma}_{L^{2}(\Omega)} \norm{\nabla u^h_\sigma}_{L^{2}(\Omega)} \\
&\lesssim \|Jg\|_{H_\diamond^{-1/2}(\partial \Omega)}.
\end{split}
\end{equation*}
In particular, for $g\in L_\diamond^{2}(\partial \Omega)$ with $\norm{g}_{L_\diamond^{2}(\partial \Omega)}=1$ we have
\begin{equation}\label{eq:step1}
\norm{(\ND_\sigma'\tau)(I-P_N) g}_{L_\diamond^{2}(\partial \Omega)}\lesssim  \|J(I-P_N) g\|_{H_\diamond^{-1/2}(\partial \Omega)}\le \delta_N,
\end{equation}
where $\delta_N=\norm{J(I-P_N)}_{L^2_\diamond(\partial\Omega)\to H^{-\frac12}_\diamond(\partial\Omega)}$ and $I=I_{L^2_\diamond(\partial\Omega)}$.

Arguing in a similar way, using that $I-P_N$ is self-adjoint we obtain
\begin{equation}\label{eq:step2}
\norm{(I-P_N)(\ND_\sigma'\tau) g}_{L_\diamond^{2}(\partial \Omega)}\lesssim   \sup_{\|h\|_{L_\diamond^{2}(\partial \Omega)}=1}  \|J(I-P_N) h\|_{H_\diamond^{-1/2}(\partial \Omega)} =\delta_N.
\end{equation}

Recall the definition $s_N = \sup_{\xi \in K}\|R_N F'(\xi)\|_{W \to Y}$, where $R_N=I_Y-Q_N$. Arguing as in \eqref{eq:RN}, and using \eqref{eq:step1} and \eqref{eq:step2}, for $\xi\in K$ and $\tau\in W$ with $\norm{\tau}_{L^\infty(\Omega)}\le 1$ we have
\[
\begin{split}
\|R_N \ND_\xi'\tau\|_{Y}&=\sup_{\norm{g}_{L^2_\diamond(\partial\Omega)}=1} \|(R_N \ND_\xi'\tau)g\|_{L^2_\diamond(\partial\Omega)}   \\
&\le \sup_{\norm{g}_{L^2_\diamond(\partial\Omega)}=1} \|(I-P_N)(\ND_\xi'\tau)g\|_{L^2_\diamond(\partial\Omega)} + p \|(\ND_\xi'\tau)(I-P_N)g\|_{L_\diamond^{2}(\partial \Omega)}\\
&\lesssim p\,\delta_N.
\end{split}
\]
In particular, $s_N\lesssim p\,\delta_N$. 

Now recall that $\delta_N=\norm{J(I-P_N)}_{L^2_\diamond(\partial\Omega)\to H^{-\frac12}_\diamond(\partial\Omega)}$. By the Kondrachov embedding theorem, the operator $J$ is compact and so, by Lemma~\ref{lem:comp}, we have $\delta_N\to 0$.

Thus, the conclusion immediately follows from Theorem~\ref{theo:main}, part \ref{cond:sn}.
\end{proof}

\subsubsection*{Dirichlet-to-Neumann map}
The model based on the ND map is arguably more realistic in view of the applications \cite{eit-1999}. However, an alternative model for EIT considers boundary data modeled by the Dirichlet-to-Neumann (DN) map, namely  the operator 
\begin{equation*}
\Lambda_\sigma \colon H^{1/2}(\partial \Omega) \to H^{-1/2}(\partial \Omega),\qquad \Lambda_\sigma(f) = \sigma \left.\frac{\partial u^f_\sigma}{\partial \nu}\right|_{\partial \Omega},
\end{equation*}
where $u^f_\sigma\in H^1(\Omega)$ is the unique  solution of the Dirichlet problem for the conductivity equation
\begin{equation*}
\left\{
\begin{array}{ll}
-\nabla \cdot (\sigma \nabla u^f_\sigma) = 0 \quad &\text{in } \Omega,\\
u^f_\sigma =f \quad &\text{on } \partial \Omega.
\end{array}
\right.
\end{equation*}
Most  Lipschitz stability estimates for the inverse conductivity problem have been obtained for the DN map, i.e.\ they are of the form
\begin{equation*}
\|\sigma_1-\sigma_2\|_{L^\infty(\Omega)}\leq C \|\Lambda_{\sigma_1}-\Lambda_{\sigma_2}\|_{H^{1/2}(\partial \Omega) \to H^{-1/2}(\partial \Omega)},
\end{equation*}
for $\sigma_1,\sigma_2$ belonging to a known finite-dimensional subspace of $L^{\infty}(\Omega)$.\smallskip

Here we briefly sketch two possible approaches to extend Theorem~\ref{thm:eit} to this setting.

\begin{enumerate}
\item Given a Lipschitz stability estimate for the DN map, one can easily obtain the same stability (up to a slightly different constant) for the ND map. The only difference lies in the use of an integration by part (Alessadrini's identity) for the ND map instead of the usual one for the DN map. In this case, Theorem~\ref{thm:eit} applies directly.
\item It is also possible to avoid changing the boundary operator, and to obtain stability with a discretization of the DN map, as a consequence of Theorem~\ref{theo:main}. The compactness of the Frech\'et derivative, required to fit in the framework discussed in Example~\ref{ex:gal}, follows by the fact that $\Lambda_{\sigma_1}-\Lambda_{\sigma_2}$ is analytic smoothing \cite{mandache2001}, provided that $\sigma_1$ and $\sigma_2$ coincide in a neighborhood of the boundary.
\end{enumerate}

\subsection{Inverse scattering problem}

We now discuss the inverse medium problem in scattering theory \cite{2013-colton-kress}. The physical model here is 
\[
\left\{\begin{array}{ll}\Delta u+k^{2} n(x) u=0 \quad &\text { in } \mathbb{R}^{3}, \\ 
{u=u^{i}+u^{s}} &\text { in } \mathbb{R}^{3},
\end{array}\right.
\]
augmented with the Sommerfeld radiation condition
\[
\lim _{R \rightarrow+\infty} \int_{\partial B_{R}}\left|\frac{\partial u^{s}(y) }{ \partial r}-\mathrm{i} k u^{s}(y)\right|^{2}ds(y)=0,
\]
where $r=|y|$,  $k>0$ is the wavenumber, $n\in L^\infty(\R^3;\C)$ is the complex refractive index of the medium such that $\Imm(n)\ge 0$ in $\R^3$ and $\supp (1-n)\subseteq B$ for some open ball $B$ and $u^i$ is the incident field satisfying
\[
\Delta u^i+k^{2} u^i=0 \quad \text { in } \mathbb{R}^{3}.
\] 
The far-field, or scattering amplitude, is given by
\begin{equation*}
u_n^{\infty}(\hat{x})=\frac{1}{4 \pi} \int_{\partial B_{R}}\left(u^{s}(y) \frac{\partial e^{-i k \hat{x} \cdot y}}{\partial r}-\frac{\partial u^{s}}{\partial r}(y) e^{-i k \hat{x} \cdot y}\right) ds(y), \quad \hat{x} \in S^{2},
\end{equation*}
where $S^2$ is the unit sphere in $\R^3$ and $R>0$ is large enough so that $\overline B \subseteq B_R$. Choosing incoming waves $u^{i}(x)=e^{\mathrm{i} k x \cdot d}$ for $d\in S^2$, let $u^\infty_n(\cdot,d)$ denote the corresponding far-field measurements, so that $u_n^\infty\in L^2(S^2\times S^2)$. The following inverse  problem arises from this framework.

\vspace*{.2cm}

{\bf Fixed frequency inverse scattering problem.} Given $u^\infty_{n}\in L^2(S^2\times S^2)$ at fixed $k>0$, find $n$ in $B$.

\vspace*{.2cm}

As in the Calder\'on's problem, the stability of this inverse problem is only logarithmic in the general case \cite{1990-stefanov}, but Lipschitz estimates may be derived under a priori assumptions on $n$, and the theory developed in this paper may be applied.

We now set the various objects introduced in Section~\ref{sec:lip} (following \cite{bourgeois2013}). Let
\begin{itemize}
\item $X=L^\infty(B;\C)$;
\item $Y=L^2(S^2\times S^2)$;
\item $A=L^\infty_+(B)=\{f\in L^\infty(B;\C):\Imm (n)\ge \lambda\text{ in $B$ for some $\lambda>0$}\}$;
\item $F(n)=u_n^\infty$ be the far-field pattern associated to the refractive index $n$ extended by $1$ to the whole $\R^3$;
\item $W$ be a finite-dimensional subspace of $L^\infty(B;\C)$ and $K$ be a convex and compact subset of $W\cap A$;
\item and $Q_N\colon  L^2(S^2\times S^2)\to L^2(S^2\times S^2)$ be bounded linear maps, $N\in \N$, such that $Q_N=Q_N^*$ and $Q_N\to I_{L^2(S^2\times S^2)}$ strongly as $N\to+\infty$.
\end{itemize}
By using Theorem~\ref{theo:lip}, it was proven in \cite{bourgeois2013} that the inverse problem of recovering a refractive index $n$ in $K$ from its far-field pattern $u^\infty_n$ is Lipschitz stable, namely there exists $C>0$ such that
\[
\left\|n_{1}-n_{2}\right\|_{L^{\infty}(B)} \leqslant C\left\|u_{n_1}^{\infty}-u_{n_2}^{\infty}\right\|_{L^{2}\left(S^{2} \times S^{2}\right)},\qquad n_1,n_2 \in K.
\]
This estimate still requires the knowledge of the full measurements $u^\infty_{n_i}$ in $L^2(S^2\times S^2)$. By applying Theorem~\ref{theo:main} (or directly Corollary~\ref{cor:main}), we can establish the following result.
\begin{theorem}\label{thm:scattering}
Under the above assumptions, there exists $N\in\N$ (given explicitly by \eqref{cond:sn2}) such that
\begin{equation*}
\left\|n_{1}-n_{2}\right\|_{L^{\infty}(B)} \leqslant 2C\left\|Q_N(u_{n_1}^{\infty})-Q_N(u_{n_2}^{\infty})\right\|_{L^{2}\left(S^{2} \times S^{2}\right)},\qquad n_1,n_2 \in K.
\end{equation*}
\end{theorem}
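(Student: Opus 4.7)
The plan is to exhibit Theorem~\ref{thm:scattering} as a direct application of Theorem~\ref{theo:main}\ref{cond:sn} (equivalently, Corollary~\ref{cor:main}\ref{cond:yhil}), using the full-measurement Lipschitz estimate of \cite{bourgeois2013} as the input and the strong convergence $Q_N\to I_Y$ to trigger the smallness of the residuals $s_N$. Since $x_1,x_2$ range over $K$, the mismodeling terms $d(x_i,K)$ in \eqref{eq:lip2C} vanish and the desired inequality
\[
\|n_1-n_2\|_{L^\infty(B)}\le 2C\|Q_N(u_{n_1}^\infty)-Q_N(u_{n_2}^\infty)\|_{L^2(S^2\times S^2)}
\]
follows verbatim.

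To carry this out, I would first check that the abstract hypotheses of Theorem~\ref{theo:main} are met in the scattering setting. Standard results on the far-field operator (see, e.g., Colton--Kress \cite{2013-colton-kress} or the analysis underlying \cite{bourgeois2013}) give that $F\colon n\mapsto u_n^\infty$ is of class $C^1$ from $A=L^\infty_+(B)$ to $Y=L^2(S^2\times S^2)$, with Fréchet derivative expressible via a volume integral against incident and scattered fields; since $K\subseteq W\cap A$ is compact and $F\in C^1(A,Y)$, $F$ is Lipschitz on $K$. The full-measurement stability
\[
\|n_1-n_2\|_{L^\infty(B)}\le C\|u_{n_1}^\infty-u_{n_2}^\infty\|_{L^2(S^2\times S^2)},\qquad n_1,n_2\in K,
\]
is exactly the estimate established in \cite{bourgeois2013} and supplies the constant $C$ appearing in the statement.

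Next I would verify Hypothesis~\ref{hyp} for the operators $Q_N$. Because $Q_N\to I_Y$ strongly on the \emph{whole} of $Y$, I can simply take $\tilde Y=Y$: condition \ref{ass:a} is trivial, condition \ref{ass:b} follows from the uniform boundedness principle applied to the strongly convergent family $\{Q_N\}$ (so $D=\sup_N\|Q_N\|_{Y\to Y}<+\infty$), and \ref{ass:c} is the assumption itself. With $W$ finite-dimensional, Theorem~\ref{theo:main}\ref{cond:fin} then yields
\[
s_N=\sup_{\xi\in K}\|(I_Y-Q_N)F'(\xi)\|_{W\to Y}\longrightarrow 0,
\]
so there exists $N\in\N$ (characterized explicitly via \eqref{cond:sn2}, namely $s_N\le \frac{1}{2C}$) for which the hypothesis of Theorem~\ref{theo:main}\ref{cond:sn} is in force.

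Finally, applying Theorem~\ref{theo:main}\ref{cond:sn} with this $N$ and noting that $d(n_i,K)=0$ for $n_i\in K$ collapses \eqref{eq:lip2C} to the required inequality. The only genuinely scattering-specific ingredient is the Fréchet differentiability and continuity of $F$ on $A$, together with the Bourgeois--Colton--Haddar stability bound; everything else is a mechanical transcription of the abstract framework. I do not anticipate a serious obstacle here beyond checking (or citing) these regularity properties of the far-field map, since, unlike the EIT case in Theorem~\ref{thm:eit}, there is no need to explicitly bound $s_N$ in terms of an embedding constant: the strong convergence $Q_N\to I_Y$ on all of $Y$ together with $\dim W<+\infty$ already gives $s_N\to 0$ for free.
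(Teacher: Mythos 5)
Your proposal is correct and matches the paper's own (implicit) argument: the paper proves Theorem~\ref{thm:scattering} simply by invoking Theorem~\ref{theo:main} (or Corollary~\ref{cor:main}\ref{cond:yhil}) with the full-measurement estimate of \cite{bourgeois2013}, exactly as you do, and your verification of Hypothesis~\ref{hyp} with $\tilde Y=Y$ and the observation that $d(n_i,K)=0$ fills in the same routine details.
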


Let us now show why $Q_N(u_{n_i}^{\infty})$ may be seen as finite measures.
Let the maps $Q_N$ be chosen as projections onto the vector spaces generated by the first elements of an ONB of $L^2(S^2\times S^2)$, as in Example~\ref{ex:hil}. More precisely, in this case we can consider the ONB of $L^2(S^2\times S^2)$ given by the tensor products of the spherical harmonics, namely
\[
\bigl\{ (\hat x,d) \mapsto Y^m_l(\hat x) \,Y^{m'}_{l'}(d): l,l'\in \N,\, |m|\le l,\, |m'|\le l'\bigr\},
\]
and let $Q_N$ be the projections onto
\[
\operatorname{span} \bigl\{ (\hat x,d) \mapsto Y^m_l(\hat x) \,Y^{m'}_{l'}(d): 0\le l,l'\le N,\, |m|\le l,\, |m'|\le l'\bigr\}.
\]
The measurements $Q_N(u^\infty)$ are now low-frequency projections of the full far-field pattern $u^\infty$. With this choice of the maps $Q_N$, it is also possible to apply the reconstruction algorithm discussed in Section~\ref{sec:rec}, which allows for the recovery of $n$ from the measurements $Q_N(u^\infty_n)$. 

Furthermore, let us show how to obtain  a Lipschitz stability estimate with only a finite number of directions $d$ and with  measurements of the corresponding far-field patterns taken only at a finite number of locations $\hat x$ on $S^2$. 

\begin{theorem}\label{cor:scattering}
Assume that the hypotheses of Theorem~\ref{thm:scattering} hold true. Let
\[
\{\hat x_i:i\in\N\}\subseteq S^2,\qquad \{d_j:j\in\N\}\subseteq S^2,
\]
be two dense subsets of $S^2$ (possibly identical). There exist $N\in\N$ and a constant $C'>0$  such that
\begin{equation*}
\left\|n_{1}-n_{2}\right\|_{L^{\infty}(B)} \leqslant  C'\left\|\bigl(u_{n_1}^{\infty}(\hat x_i,d_j)-u_{n_2}^{\infty}(\hat x_i,d_j)\bigr)_{i,j=1}^N\right\|_2,\qquad n_1,n_2 \in K.
\end{equation*}
\end{theorem}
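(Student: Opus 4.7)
The plan is to encode the finite pointwise sampling of the far-field pattern as an orthogonal projection in a suitably smooth reproducing kernel Hilbert space, and then invoke Corollary~\ref{cor:main}(a) together with the stable sampling bound \eqref{eq:stable_sampling} from Example~\ref{ex:rkhs}.

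Concretely, I would first fix $s>2$ and set $\mathcal{H}=H^s(S^2\times S^2)$. By Sobolev embedding on the four-dimensional compact manifold $S^2\times S^2$, point evaluation is continuous on $\mathcal{H}$, so $\mathcal{H}$ is an RKHS of continuous functions on $\mathcal{A}=S^2\times S^2$, with reproducing kernel functions $k_a$. Far-field patterns are real-analytic on $S^2\times S^2$, and the same arguments that make $F\colon n\mapsto u_n^\infty$ of class $C^1$ and Lipschitz into $L^2(S^2\times S^2)$ (which is part of the hypotheses of Theorem~\ref{thm:scattering}) give, on the compact set $K$, the analogous regularity into every Sobolev norm; in particular $F\colon A\to\mathcal{H}$ is $C^1$ and Lipschitz on $K$. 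Since $\mathcal{H}\hookrightarrow L^2(S^2\times S^2)$ continuously, the Bourgeois-type full-measurement estimate lifts immediately to
\[
\|n_1-n_2\|_{L^\infty(B)}\le C\,\|u_{n_1}^\infty-u_{n_2}^\infty\|_{L^2(S^2\times S^2)}\le C''\,\|u_{n_1}^\infty-u_{n_2}^\infty\|_{\mathcal{H}},\qquad n_1,n_2\in K.
\]

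Next I would construct the projections from the given samples. The countable product $\{(\hat x_i,d_j):i,j\in\N\}\subseteq S^2\times S^2$ is dense, since each factor is, so Example~\ref{ex:rkhs} applies: define $G_N:=\operatorname{span}\{k_{(\hat x_i,d_j)}:1\le i,j\le N\}\subseteq\mathcal{H}$ and let $\tilde Q_N\colon\mathcal{H}\to\mathcal{H}$ be the orthogonal projection onto $G_N$. Then $\tilde Q_N\to I_{\mathcal{H}}$ strongly as $N\to+\infty$. Applying Corollary~\ref{cor:main}(a) with data space $Y=\mathcal{H}$ (injectivity of $F|_{W\cap A}$ and $F'(x)|_W$ follows from the Lipschitz stability above) yields some $N\in\N$ and $\tilde C>0$ with
\[
\|n_1-n_2\|_{L^\infty(B)}\le\tilde C\,\bigl\lVert\tilde Q_N u_{n_1}^\infty-\tilde Q_N u_{n_2}^\infty\bigr\rVert_{\mathcal{H}},\qquad n_1,n_2\in K.
\]
Finally, the stable sampling bound \eqref{eq:stable_sampling} applied to $f=u_{n_1}^\infty-u_{n_2}^\infty\in\mathcal{H}$ gives a constant $C_N>0$ such that $\|\tilde Q_N f\|_{\mathcal{H}}\le C_N\|(f(\hat x_i,d_j))_{i,j=1}^N\|_2$, and combining the two inequalities yields the theorem with $C'=\tilde C\,C_N$. (If the indices from Example~\ref{ex:rkhs} are naturally enumerated rather than bi-indexed, one simply replaces $N$ by $N^2$ to align with the $N\times N$ grid in the statement.)

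The main obstacle I foresee is the smoothness step: verifying that $F$ is $C^1$ and Lipschitz not only into $L^2(S^2\times S^2)$ but into $\mathcal{H}=H^s(S^2\times S^2)$ with $s>2$. This is a standard fact about far-field patterns (analyticity in the observation and incidence directions, with uniform control over the compact family $K$ of refractive indices via the Lippmann–Schwinger equation), but it is where the argument is not automatic; everything else is a routine assembly of Corollary~\ref{cor:main} and the RKHS sampling estimate.
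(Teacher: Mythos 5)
Your proposal is correct and follows essentially the same route as the paper, which also works in the reproducing kernel Hilbert space $H^3(S^2\times S^2)$ (your general $s>2$), takes $G_N=\operatorname{span}\{k_{(\hat x_i,d_j)}:i,j=1,\dots,N\}$, reruns the argument of Theorem~\ref{thm:scattering} with $Y=H^3$, and concludes via the stable sampling bound \eqref{eq:stable_sampling}. The regularity of $F$ into the Sobolev space, which you correctly flag as the one nonautomatic step, is likewise asserted rather than proved in the paper (via the analyticity of far-field patterns), so your treatment matches the paper's level of detail.
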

\begin{proof}
We first observe that the far-field patterns are analytic functions of $\hat x$ and $d$ \cite{2011-kirsch}, and so taking pointwise evaluations is meaningful.

We consider the Sobolev space $H^3(S^2\times S^2)$ \cite{2013-brauchart-etal,2020-barcelo-etal,devito-2020,li2015sampling}. Since $\dim(S^2\times S^2)=4$ and $3>\frac{4}{2}$, $H^3(S^2\times S^2)$ is continuously embedded into $C(S^2\times S^2)$, and so it is a reproducing kernel Hilbert space consisting of continuous functions. As in  Example~\ref{ex:rkhs}, define
\[
G_N =\operatorname{span}\{k_{(\hat x_i,d_j)}:i,j=1,\dots,N\}\subseteq H^3(S^2\times S^2),
\]
and let $Q_N\colon H^3(S^2\times S^2)\to H^3(S^2\times S^2)$ be the projection onto $G_N$. Arguing as in the derivation of Theorem~\ref{thm:scattering} (with $Y=H^3(S^2\times S^2)$, since $u^\infty_{n_l}\in H^3(S^2\times S^2)$),
there exists $N\in\N$ such that
\[
\left\|n_{1}-n_{2}\right\|_{L^{\infty}(B)} \le 2C\left\|Q_N(u_{n_1}^{\infty})-Q_N(u_{n_2}^{\infty})\right\|_{H^{3}\left(S^{2} \times S^{2}\right)},\qquad n_1,n_2 \in K.
\]
Thus, in view of \eqref{eq:stable_sampling} we obtain
\[
\left\|n_{1}-n_{2}\right\|_{L^{\infty}(B)}
\le C'\left\|\bigl(u_{n_1}^{\infty}(\hat x_i,d_j)-u_{n_2}^{\infty}(\hat x_i,d_j)\bigr)_{i,j=1}^N\right\|_2,\qquad n_1,n_2 \in K,
\]
for some $C'>0$, as desired.
\end{proof}

\subsection{Quantitative photoacoustic tomography (QPAT)}
Let $\Omega\subseteq\R^d$ be a bounded domain of class $C^{1,\alpha}$ for some $\alpha\in (0,1)$ and $\mu\in L^\infty(\Omega)$ be a nonnegative function representing the optical absorption of the medium. Photoacoustic tomography (PAT) is a hybrid modality based on coupling optical and ultrasonic waves \cite{Wang1458}: a laser pulse illuminates the biological tissue under consideration, whose thermal expansion creates a ultrasonic wave that can be measured outside the medium.
In the first step of photoacoustic tomography \cite{2015-kuchment-kunyansky,alberti-capdeboscq-2018}, by solving a linear (and stable) inverse source problem for the wave equation, we measure the internal optical energy
\[
\mu(x) u(x),\qquad \text{a.e. }x\in \Omega,
\]
where the light intensity $u\in H^1(\Omega)$ solves
\begin{equation}\label{eq:light}
\left\{\begin{array}{ll}
-\Delta u+\mu u=0 &\text{in $\Omega$,}\\
u=\varphi &\text{on $\partial\Omega$,}
\end{array}
\right.
\end{equation} 
for a fixed known illumination $\varphi\in C^{1,\alpha}(\overline\Omega)$ such that $\min\varphi>0$.

We have considered the diffusion approximation for light propagation with constant diffusion term. More involved models, involving a non-constant leading order term \cite{BAL-UHLMANN-2010,BAL-REN-2011} or the use of the more accurate transport equation  \cite{2010-bal-jollivet-jugnon}, would complicate the analysis, but the type of results obtained would be similar: we have decided to discuss the simplest model in order to highlight the impact of the results of this paper.

The second step of PAT, called quantitative, consists in the reconstruction of $\mu$ from the knowledge of the internal energy.

\vspace*{.2cm}

{\bf Inverse problem of QPAT.} Given $\mu u$ in $\Omega$, find $\mu$ in $\Omega$.

\vspace*{.2cm}

We now set the various objects introduced in Section~\ref{sec:lip}. Let
\begin{itemize}
\item $X=W$ be a finite-dimensional subspace of $L^\infty(\Omega)$;
\item $Y=L^2(\Omega)$;
\item $A=\{f\in X:\lambda^{-1}<\mu< \lambda\text{ in $\Omega$ for some $\lambda>0$}\}$;
\item $F(\mu)=\mu u$, where $u\in H^1(\Omega)$ is the unique weak solution of \eqref{eq:light};
\item $K=\{f\in X:\Lambda^{-1}\le\mu\le \Lambda\text{ in $\Omega$}\}$ for some fixed $\Lambda>0$;
\item and $Q_N\colon  L^2(\Omega)\to L^2(\Omega)$ be bounded linear maps, $N\in \N$, such that $Q_N=Q_N^*$ and $Q_N\to I_{L^2(\Omega)}$ strongly as $N\to+\infty$.
\end{itemize}

Several Lipschitz stability estimates have been derived for PAT \cite{BAL-UHLMANN-2010,BAL-REN-2011,KS-IP-12}. For completeness, we provide a proof in the setting considered here.
\begin{proposition}\label{prop:pat}
There exists a constant $C>0$ depending only on $\Omega$, $W$, $\Lambda$ and $\varphi$ such that
\begin{equation*}
\|\mu_1-\mu_2\|_\infty\le C \|\mu_1 u_1- \mu_2 u_2\|_2,\qquad \mu_1,\mu_2\in K,
\end{equation*}
where $u_i\in H^1(\Omega)$ is the solution to \eqref{eq:light} with coefficient $\mu=\mu_i$.
\end{proposition}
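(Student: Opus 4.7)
The plan is to bypass Theorem~\ref{theo:lip} and obtain the estimate by a direct energy argument, exploiting that $F(\mu)=\mu u$ is essentially $\Delta u$ via \eqref{eq:light} and that $u$ is uniformly bounded below. Denote $u_i=u[\mu_i]$ and subtract the PDEs~\eqref{eq:light}: since $\Delta u_i=\mu_i u_i$, we obtain
\[
\Delta(u_1-u_2)=\mu_1 u_1-\mu_2 u_2\quad\text{in }\Omega,\qquad (u_1-u_2)|_{\partial\Omega}=0.
\]
Testing against $u_1-u_2\in H^1_0(\Omega)$ and applying Poincar\'e's inequality yields
\[
\|u_1-u_2\|_{L^2(\Omega)}\le c(\Omega)\,\|\mu_1 u_1-\mu_2 u_2\|_{L^2(\Omega)}.
\]

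Next I would use the algebraic identity $(\mu_1-\mu_2)u_1=(\mu_1 u_1-\mu_2 u_2)-\mu_2(u_1-u_2)$ together with $\|\mu_2\|_{L^\infty(\Omega)}\le\Lambda$ to deduce
\[
\|(\mu_1-\mu_2)u_1\|_{L^2(\Omega)}\le\bigl(1+\Lambda\,c(\Omega)\bigr)\|\mu_1 u_1-\mu_2 u_2\|_{L^2(\Omega)}.
\]
Combined with a uniform pointwise lower bound $u_1\ge u_{\min}>0$ on $\overline\Omega$ (established below), this gives
\[
\|\mu_1-\mu_2\|_{L^2(\Omega)}\le \frac{1+\Lambda\,c(\Omega)}{u_{\min}}\,\|\mu_1 u_1-\mu_2 u_2\|_{L^2(\Omega)}.
\]
Since $W$ is finite-dimensional, the norms $\|\cdot\|_{L^\infty(\Omega)}$ and $\|\cdot\|_{L^2(\Omega)}$ are equivalent on $W$, with a constant $c_W$ depending only on $W$; passing to $\|\cdot\|_{L^\infty(\Omega)}$ on the left-hand side then produces the stability estimate with $C=c_W\bigl(1+\Lambda c(\Omega)\bigr)/u_{\min}$, which depends only on $\Omega,W,\Lambda,\varphi$, as claimed.

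The crux is the uniform lower bound $u_1\ge u_{\min}>0$ on $\overline\Omega$, independent of $\mu_1\in K$. I would obtain it by a comparison argument. Let $v\in H^1(\Omega)$ solve $-\Delta v+\Lambda v=0$ in $\Omega$ with $v|_{\partial\Omega}=\min_{\partial\Omega}\varphi>0$; by the strong maximum principle and the continuity of $v$ on $\overline\Omega$ (granted by the $C^{1,\alpha}$ regularity of $\Omega$ and $\varphi$), one has $v\ge u_{\min}:=\min_{\overline\Omega}v>0$, and $u_{\min}$ depends only on $\Omega,\Lambda$ and $\min\varphi$. Setting $w=u_1-v$, a direct computation gives $-\Delta w+\mu_1 w=(\Lambda-\mu_1)v\ge 0$ in $\Omega$ together with $w|_{\partial\Omega}=\varphi-\min_{\partial\Omega}\varphi\ge 0$, so the weak maximum principle for $-\Delta+\mu_1$ (applicable since $\mu_1\ge\Lambda^{-1}>0$) yields $w\ge 0$, that is, $u_1\ge v\ge u_{\min}$, which closes the argument.
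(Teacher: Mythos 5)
Your proof is correct, and while it shares the overall skeleton of the paper's argument (energy estimate for $u_1-u_2$ in terms of $\mu_1u_1-\mu_2u_2$, an algebraic identity isolating $\mu_1-\mu_2$, a uniform lower bound $u\ge u_{\min}>0$, and norm equivalence on the finite-dimensional $W$), you execute the two key steps differently. First, your identity $(\mu_1-\mu_2)u_1=(\mu_1u_1-\mu_2u_2)-\mu_2(u_1-u_2)$ replaces the paper's division-based identity $\mu_1-\mu_2=\frac{F(\mu_1)-F(\mu_2)}{u_1}+F(\mu_2)\frac{u_2-u_1}{u_1u_2}$; since you only multiply by $\mu_2$ (bounded by $\Lambda$) rather than by $F(\mu_2)$, you do not need the uniform upper bound $\|u(\mu)\|_{C^1(\overline\Omega)}\le C$ from elliptic regularity that the paper invokes for this purpose. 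Second, your lower bound on $u$ is obtained by a barrier/comparison argument: $u_1\ge v$ where $-\Delta v+\Lambda v=0$ with constant boundary datum $\min\varphi$, via the weak maximum principle for $-\Delta+\mu_1$ applied to $w=u_1-v$ (note the sign check uses $\mu_1\le\Lambda$ on $K$ and $v\ge 0$, both of which you have). The paper instead combines the $C^1(\overline\Omega)$ bound (to control a boundary layer where $u\ge\min\varphi/2$) with the strong maximum principle and the Harnack inequality in the interior. Your barrier argument is more elementary and self-contained, gives the bound up to the boundary in one stroke, and makes the dependence of $u_{\min}$ on the data ($\Omega$, $\Lambda$, $\min\varphi$) transparent; the paper's route, while heavier, is the one that generalizes more readily when quantitative interior positivity is needed for non-constant leading-order coefficients. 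Both are valid, and your constant $C=c_W\bigl(1+\Lambda c(\Omega)\bigr)/u_{\min}$ has the claimed dependence.
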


\begin{remark*}
As it will be clear from the proof, the dependence of $C$ on $W$ appears only to replace the $L^2$ norm of $\mu_1-\mu_2$ on the left hand side by the more natural $L^\infty$ norm. This dependence may be dropped if higher order Sobolev (or H\"older) norms are used \cite{BAL-UHLMANN-2010,BAL-REN-2011}.
\end{remark*}

\begin{proof}
Let us denote the unique solution to \eqref{eq:light} by $u(\mu)$, so that $F(\mu)=\mu\, u(\mu)$. In the proof, by an abuse of notation, several different positive constants depending only on $\Omega$, $W$, $\Lambda$ and $\varphi$ will be denoted by the same letter $C$.

We claim that there exists $C>0$ such that
\begin{equation}\label{eq:claim}
u(\mu) \ge C\quad\text{in}\;\Omega,\qquad \mu \in K.
\end{equation}
Take $\mu \in K$. By classical elliptic regularity theory (see, e.g., \cite[Theorem~8.29]{GILBARG-2001}) we have that $u(\mu)\in C^1(\overline\Omega)$ and
\begin{equation}\label{eq:regularity}
\|u(\mu)\|_{C^1(\overline\Omega)} \le C.
\end{equation}
As a consequence, since $\varphi$ is positive, there exists $\Omega'\Subset\Omega$ such that
\begin{equation}\label{eq:nearboundary}
u(\mu)\ge \frac{\min\varphi}{2}\quad\text{in}\;\Omega\setminus\Omega'.
\end{equation}
By the strong maximum principle (see, e.g., \cite[Theorem~8.19]{GILBARG-2001} applied to $-u(\mu)$) we have $u(\mu)>0$ in $\Omega$. Thus, the Harnack inequality (\cite[Theorem~8.20]{GILBARG-2001}) yields
\begin{equation}\label{eq:interior}
u(\mu)\ge C\quad\text{in}\;\Omega'.
\end{equation}
Finally, combining \eqref{eq:nearboundary} and \eqref{eq:interior} we obtain \eqref{eq:claim}.

In view of \eqref{eq:light} we have that
\begin{equation*}
\left\{\begin{array}{ll}
-\Delta (u(\mu_1)-u(\mu_2))=F(\mu_2)-F(\mu_1) &\text{in $\Omega$,}\\
u(\mu_1)-u(\mu_2)=0 &\text{on $\partial\Omega$.}
\end{array}
\right.
\end{equation*}
Thus, standard energy estimates for the Poisson equation give
\begin{equation}\label{eq:poisson}
\|u(\mu_1)-u(\mu_2)\|_2\le C \|F(\mu_1)-F(\mu_2)\|_2,\qquad \mu_1,\mu_2\in K.
\end{equation}

Let us now show the Lipschitz stability estimate of the statement. Using the identity
\[
\mu_1-\mu_2=\frac{F(\mu_1)-F(\mu_2)}{u(\mu_1)} +F(\mu_2)\frac{u(\mu_2)-u(\mu_1)}{u(\mu_1)u(\mu_2)},\qquad\mu_1,\mu_2\in K,
\]
thanks to \eqref{eq:claim}, \eqref{eq:regularity} and \eqref{eq:poisson} we readily derive for $\mu_1,\mu_2\in K$
\begin{equation*}
\begin{split}
\norm{\mu_1-\mu_2}_2&\le \frac{\|F(\mu_1)-F(\mu_2)\|_2}{\inf u(\mu_1)} +\norm{F(\mu_2)}_\infty \frac{\|u(\mu_1)-u(\mu_2)\|_2}{\inf u(\mu_1)\,\inf u(\mu_2)}\\
&\le C \|F(\mu_1)-F(\mu_2)\|_2.
\end{split}
\end{equation*}
Finally, the $L^2$ norm of $\mu_1-\mu_2$ may be replaced by the $L^\infty$ norm because the space $W$ is finite dimensional and all norms are equivalent.
\end{proof}

Note that the map $F$ is certainly Fr\'echet differentiable because it is in fact analytic (the map $\mu\in A\mapsto u(\mu)\in Y$ is analytic). Therefore, it is possible to apply Theorem~\ref{theo:main} to this inverse problem and obtain a Lipschitz stability estimate with finite-dimensional measurements.

\begin{theorem}
Under the above assumptions, there exists $N\in\N$ (given explicitly by \eqref{cond:sn2}) such that
\begin{equation}
\|\mu_1-\mu_2\|_\infty\le 2C \|Q_N(\mu_1 u_1)-Q_N(\mu_2 u_2)\|_2,\qquad \mu_1,\mu_2\in K,
\end{equation}
where $u_i\in H^1(\Omega)$ is the solution to \eqref{eq:light} with coefficient $\mu=\mu_i$.
\end{theorem}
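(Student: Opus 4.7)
The plan is to apply Theorem~\ref{theo:main} directly, using Proposition~\ref{prop:pat} as the full-measurement Lipschitz stability estimate. The QPAT setup is, in fact, the simplest instance of the abstract framework: the data space $Y=L^2(\Omega)$ is a scalar-valued Hilbert space, $Q_N\to I_Y$ is assumed strongly on all of $Y$, and $X=W$ is already finite dimensional, so there is no discretization of the unknown to track.

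First I would verify the hypotheses of Theorem~\ref{theo:main}. The set $K$ is convex (it is defined by two pointwise linear inequalities on members of the vector space $W$) and compact (closed and bounded inside the finite-dimensional space $W$). The map $F(\mu)=\mu\, u(\mu)$ is $C^1$ on $A$ (indeed analytic, as remarked in the text), and Proposition~\ref{prop:pat} supplies the Lipschitz stability estimate $\|\mu_1-\mu_2\|_\infty\le C\|F(\mu_1)-F(\mu_2)\|_2$ on $K$. Since the conclusion of Theorem~\ref{theo:main}\ref{cond:sn} only uses the Lipschitz constant $L$ of $F$ through the error terms $d(x_i,K)$, which vanish for $x_1,x_2\in K$, the required global Lipschitz regularity of $F$ may be replaced here by Lipschitz regularity on a fixed open, convex neighborhood of $K$ in $A$, which follows immediately from $F\in C^1(A,Y)$ and the compactness of $K$.

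Next I would check Hypothesis~\ref{hyp} with the natural choice $\tilde Y=Y=L^2(\Omega)$: condition \ref{ass:a} is automatic; condition \ref{ass:c} is precisely the assumed strong convergence $Q_N\to I_Y$; and condition \ref{ass:b} follows from the uniform boundedness principle applied to the pointwise-convergent family $\{Q_N\}_N$.

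With the hypotheses in place, Theorem~\ref{theo:main}\ref{cond:fin} yields
\[
s_N=\sup_{\xi\in K}\|(I_Y-Q_N)F'(\xi)\|_{W\to Y}\longrightarrow 0\quad\text{as }N\to+\infty,
\]
so we may pick $N\in\N$ so large that $s_N\le 1/(2C)$, which is exactly condition \eqref{cond:sn2}. Theorem~\ref{theo:main}\ref{cond:sn} then gives, for $\mu_1,\mu_2\in K$,
\[
\|\mu_1-\mu_2\|_\infty\le 2C\,\|Q_N(F(\mu_1))-Q_N(F(\mu_2))\|_2+3CDL\bigl(d(\mu_1,K)+d(\mu_2,K)\bigr),
\]
and the error terms vanish since $\mu_1,\mu_2\in K$, which is the desired estimate. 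No substantial obstacle arises: the only subtlety is the mismatch between the global Lipschitz hypothesis on $F$ in Theorem~\ref{theo:main} and the merely local Lipschitz regularity available here, but this is harmless because the $L$-dependent term is multiplied by the distances $d(\mu_i,K)$ which are zero on the set of interest.
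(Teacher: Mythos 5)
Your proof is correct and follows the same route as the paper, which likewise obtains this theorem simply by applying Theorem~\ref{theo:main} with the full-measurement estimate of Proposition~\ref{prop:pat}. Your additional observations---that Hypothesis~\ref{hyp} holds trivially with $\tilde Y=Y$, that the mismodeling terms vanish since $\mu_1,\mu_2\in K$, and that only Lipschitz regularity of $F$ near $K$ is actually needed---correctly fill in the details the paper leaves implicit.
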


As in the previous examples, the projections $Q_N$ may be chosen as low-pass filters. The reconstruction algorithm discussed in Section~\ref{sec:rec} may be used to recover the unknown $\mu$ from finite measurements.

We considered PAT as a key example, but the analysis presented here may be easily generalized to several other hybrid imaging inverse problems with internal data \cite{B-IO-12,alberti-capdeboscq-2018}, such as thermo-acoustic tomography, dynamic elastography and acousto-electric tomography.

\section{Conclusion}\label{sec:conclusions}
In this paper, we have discussed a general framework to derive Lipschitz stability estimates for nonlinear inverse problems with finite measurements, under the assumption that the unknown belongs to a finite-dimensional space. A global reconstruction algorithm was also derived, based on a nonlinear Landweber iteration. We then applied the general theory to Calder\'on's inverse problem for EIT, to inverse scattering and to quantitative photoacoustic tomography.

Let us discuss a few research directions motivated by the findings of this work.
\begin{itemize}
\item The assumption of the finite dimensionality of $W$ is not needed in Theorem~\ref{theo:main}, part \ref{cond:sn}. Thus, it would be interesting to investigate whether the condition $s_N\le \frac{1}{2C}$ may be derived in some cases with the only compactness assumption on $K$. The latter could follow from a more general argument based on compact embeddings.
\item The theory of compressed sensing (CS) allows for the recovery of sparse signals from a number of (linear) measurements that is proportional to the sparsity (up to log factors), under suitable incoherence assumptions. The classical theory, working for a unitary forward map  $F$, was recently extended to arbitrary linear maps with bounded inverse \cite{alberti2017infinite}. It would be interesting to apply CS to the general nonlinear setup presented in this paper by using the results of \cite{grasmair-etal-2011,grasmair-etal-2011b}.
\item The numerical implementation of the reconstruction algorithm constructed in Section~\ref{sec:rec} would allow for testing its efficiency and applicability.
\item In this work, we have limited ourselves to considering three nonlinear inverse problems as proofs of concept, but it would be interesting to apply the theory to other examples,  for instance where the unknown is supposed to have a particular shape (e.g.\ a polygon) with unknown location \cite{beretta2015,beretta2019} or for inverse problems with fractional operators \cite{ruland2018,cekic2018calder}.
\item In Theorem~\ref{theo:main}, the dependence on the mismodeling error in the stability estimate is not sharp, as discussed in Remark~\ref{rem:2}. A very challenging question is then to improve upon this dependence, without losing the generality of our approach.
\end{itemize}

\bibliographystyle{abbrvurl}
\bibliography{CS}

\appendix

\section{Local convergence of Landweber iteration}\label{sec:app}
For the sake of completeness, we describe the convergence result used in Section~\ref{sub:local}. We present a simplified version of  \cite[Theorem~3.2]{dehoop2012} that is sufficient for our scopes.

 Let $X$ and $Y$ be Hilbert spaces, $A\subseteq X$ be an open set, $K\subseteq A$ be a compact set and $F\colon A\to Y$ be such that
\begin{enumerate}
\item $F\in C^1(A,Y)$ and $F'\colon A\to \mathcal{L}_c(X,Y)$ is Lipschitz continuous, namely
\[
\|F'(x_1)-F'(x_2)\|_{X\to Y}\le L\|x_1-x_2\|_X,\qquad x_1,x_2\in A,
\]
for some $L>0$;
\item $F^{-1}$ is Lipschitz continuous, namely
\[
\|x_1-x_2\|_X \le C\|F(x_1)-F(x_2)\|_Y,\qquad x_1,x_2\in A,
\]
for some $C>0$.
\end{enumerate}
\begin{proposition}\label{prop:land}
There exist $\rho,c,\mu\in (0,1)$ such that the following is true. Take $x^\dagger\in K$ and let $y=F(x^\dagger)$. If $x_0\in K$ satisfies
\[
\|x^\dagger-x_0\|<\rho,
\]
then the iterates $(x_k)$ of the Landweber iteration
\[
x_{k+1}=x_{k}-\mu  F'(x_{k})^{*}\left(F(x_{k})-y\right),\qquad k\in \N,
\]
converge to $x^\dagger$ and satisfy
\[
\|x^\dagger-x_k\|\le \rho c^k,\qquad k\in \N.
\]
\end{proposition}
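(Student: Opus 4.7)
My plan is to prove Proposition~\ref{prop:land} by the classical local-contraction argument for the Landweber iteration, in which the Lipschitz inverse provides a uniform coercivity of $F'$, and the $L$-Lipschitz property of $F'$ controls the linearization remainder quadratically. Setting $e_k := x_k - x^\dagger$ and $y = F(x^\dagger)$, the Landweber update rewrites as
\[
e_{k+1} = \bigl[I - \mu F'(x_k)^* F'(x_k)\bigr] e_k - \mu F'(x_k)^* R_k,
\]
where $R_k := F(x_k) - F(x^\dagger) - F'(x_k) e_k$ satisfies $\|R_k\|_Y \le \tfrac{L}{2}\|e_k\|_X^2$ by Taylor's theorem with integral remainder along the segment $[x^\dagger, x_k]$ (which lies in $A$ as long as the iterate is close to $x^\dagger$) combined with the Lipschitz continuity of $F'$.

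Next I will extract two uniform bounds on $F'$ on a compact neighborhood of $K$. Substituting $x_1 = x + th$, $x_2 = x \in A$ in the Lipschitz inverse estimate and sending $t \to 0^+$ yields $\|h\|_X \le C\|F'(x) h\|_Y$ for every $x \in A$ and $h \in X$, hence $F'(x)^* F'(x) \succeq C^{-2} I$ as self-adjoint operators on $X$. Compactness of $K$ and openness of $A$ supply some $\rho_0 > 0$ with $\overline{B_X(K,\rho_0)} \subseteq A$, and continuity of $F'$ on this compact set provides an upper bound $\|F'(x)\|_{X\to Y} \le M$. After enlarging $C, L, M$ to be $\ge 1$ (WLOG), I set $\mu := 1/M^2 \in (0,1]$, so that the spectrum of $\mu F'(x)^* F'(x)$ lies in $[(CM)^{-2}, 1]$ on this neighborhood, and therefore
\[
\bigl\|I - \mu F'(x)^* F'(x)\bigr\|_{X \to X} \le 1 - (CM)^{-2} =: \gamma \in (0,1).
\]

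To close the argument I set $c := (1+\gamma)/2 \in (\gamma, 1)$ and
\[
\rho := \min\Bigl(\rho_0,\ \tfrac{2(c - \gamma)}{\mu L M},\ \tfrac{1}{2}\Bigr) \in (0,1),
\]
chosen so that $\mu M (L/2)\,\rho \le c - \gamma$. I will then prove by induction on $k$ the joint claim $\|e_k\|_X \le \rho c^k$ and $x_k \in \overline{B_X(x^\dagger,\rho)} \subseteq \overline{B_X(K,\rho_0)}$. The base case is the hypothesis on $x_0$; for the inductive step, the error recursion and the two bounds give
\[
\|e_{k+1}\|_X \le \gamma\,\|e_k\|_X + \mu M\,\tfrac{L}{2}\|e_k\|_X^2 \le \bigl(\gamma + (c - \gamma)\bigr)\,\|e_k\|_X = c\,\|e_k\|_X \le \rho c^{k+1},
\]
which also keeps $x_{k+1}$ in $\overline{B_X(x^\dagger,\rho)}$.

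The only delicacy is a bookkeeping one: the upper bound $\|F'(x)\| \le M$ is available only on the neighborhood $\overline{B_X(K,\rho_0)}$ and not on all of $A$, so the induction must propagate the geometric error bound and the containment of the iterates in that neighborhood simultaneously. This is handled automatically by the choice $\rho \le \rho_0$.
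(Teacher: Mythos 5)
The paper does not actually prove Proposition~\ref{prop:land}: it is quoted as a simplified version of Theorem~3.2 of \cite{dehoop2012}, with only a remark on why the weak sequential closedness hypothesis of that reference can be dropped. Your argument is therefore a genuinely self-contained alternative, and it is essentially correct: differentiating the Lipschitz stability estimate to get the uniform coercivity $F'(x)^*F'(x)\succeq C^{-2}I$, choosing $\mu$ so that $\|I-\mu F'(x)^*F'(x)\|\le 1-(CM)^{-2}$ by the spectral theorem for self-adjoint operators, bounding the linearization remainder quadratically via the Lipschitz continuity of $F'$, and running the joint induction on the geometric decay and the containment of the iterates is exactly the standard local-contraction mechanism that underlies the cited result. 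What your route buys is an explicit, elementary derivation of $\rho$, $c$, $\mu$ from $C$, $L$ and $\sup_K\|F'\|$; what the reference buys is a more general framework (Hölder rather than Lipschitz stability, and a treatment of noise) that the authors reuse elsewhere.

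Two small repairs. First, $\overline{B_X(K,\rho_0)}$ is \emph{not} compact when $X$ is infinite dimensional, so you cannot get the upper bound $M$ from ``continuity of $F'$ on this compact set''; instead use that $F'$ is $L$-Lipschitz on $A$, so $\|F'(x)\|_{X\to Y}\le \sup_{\xi\in K}\|F'(\xi)\|_{X\to Y}+L\rho_0$ for $x\in \overline{B_X(K,\rho_0)}$, the supremum over the compact set $K$ being finite by continuity of $F'$. (In the setting of Section~\ref{sec:rec} the spaces are finite dimensional and your phrasing is literally correct, but the proposition is stated for general Hilbert spaces.) Second, with $M\ge 1$ your choice $\mu=1/M^2$ may equal $1$, while the statement requires $\mu\in(0,1)$; just take $M>1$ strictly. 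Neither point affects the structure of the proof.
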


\begin{remark*}
The constants $\rho$, $c$ and $\mu$ are given explicitly in \cite{dehoop2012} as functions of the a priori data. The statement provided  in \cite{dehoop2012} requires $F$ to be weakly sequentially closed, but a close look to the proof indicates that this assumption may be dropped, since in this case the existence of the minimizer $x^\dagger$ is granted.
\end{remark*}

\end{document}